\newcommand{\infsup}[2]{\operatorname{inf\vphantom{sup}}\displaylimits_{#1}
  \operatorname{sup\vphantom{inf}}\displaylimits_{#2}}
\theoremstyle{plain} 
\newtheorem{theorem}{Theorem}[section]
\newaliascnt{lemma}{theorem}
\newtheorem{lemma}[lemma]{Lemma}
\newaliascnt{corollary}{theorem}
\newtheorem{corollary}[corollary]{Corollary}
\theoremstyle{definition}
\newaliascnt{definition}{theorem}
\newtheorem{definition}[definition]{Definition}
\newtheorem{example}[theorem]{Example}
\theoremstyle{remark}
\newtheorem*{notation}{Notation}
\newtheorem*{acknowledgments}{Acknowledgments}
\begin{document}

\title[The abstract Hodge--Dirac operator]{The abstract Hodge--Dirac
  operator\\and its stable discretization}

\author{Paul Leopardi}
\address{University of Newcastle\\
  Callaghan, NSW, Australia}
\email{paul.leopardi@newcastle.edu.au}

\author{Ari Stern}
\address{Washington University\\
  St.~Louis, Missouri, USA}
\email{astern@math.wustl.edu}

\begin{abstract}
  This paper adapts the techniques of finite element exterior calculus
  to study and discretize the abstract Hodge--Dirac operator, which is
  a square root of the abstract Hodge--Laplace operator considered by
  Arnold, Falk, and Winther [Bull. Amer. Math. Soc. 47 (2010),
  281--354].  Dirac-type operators are central to the field of
  Clifford analysis, where recently there has been considerable
  interest in their discretization.  We prove \emph{a priori}
  stability and convergence estimates, and show that several of the
  results in finite element exterior calculus can be recovered as
  corollaries of these new estimates.
\end{abstract}

\subjclass[2010]{Primary: 65N30; Secondary: 58A14}
\keywords{Hodge--Dirac operator, Clifford analysis, geometric
  calculus, finite element exterior calculus, Hodge theory.}


\maketitle

\section{Introduction}

\subsection{Overview}
In the numerical analysis of elliptic PDEs, much attention has been
given (quite rightly) to the discretization of the second-order
Laplace operator.  The development of mixed finite elements (e.g.,
edge elements) paved the way for the discretization of other
Laplace-type second-order differential operators, such as the vector
Laplacian, with important numerical applications in computational
electromagnetics and elasticity.  The recent development of
\emph{finite element exterior calculus}
(\citet*{ArFaWi2006,ArFaWi2010}, extending earlier seminal work by
\citet*{Bossavit1988,Bossavit1998,Hiptmair1999,Hiptmair2002,Kotiuga1984,Nedelec1980,Nedelec1986},
and many others) has shown that these operators are special cases of
the \emph{Hodge--Laplace operator} on differential $k$-forms, which
can be stably discretized by certain families of finite element
differential forms. An even more general operator, called the
\emph{abstract Hodge--Laplace operator}, includes both the
aforementioned Hodge--Laplace operator on $k$-forms, as well as other
operators that arise, for example, in elasticity.

By comparison, \emph{Dirac-type operators} have received little
attention from the perspective of numerical PDEs---despite being, in
many ways, just as fundamental as the widely-studied Laplace operators
discussed above.  Informally, a Dirac operator is a square root of
some Laplace operator, and is therefore a \emph{first-order} (rather
than second-order) differential operator.  Dirac-type operators arise
both in analysis (\citet{EaRy2007}) and in differential geometry
(\citet{Friedrich2000}), in addition to their well-known, eponymous
origins in quantum mechanics (\citet{Dirac1928}).  The study of these
first-order operators is also associated with a number of celebrated
theorems, including the Atiyah--Singer index theorem \citep{AtSi1963},
Witten's proof of the positive energy theorem \citep{Witten1981}, and
the solution of the Kato square root problem (\citet{AxKeMc2006}).
\emph{Clifford analysis} is the study of Dirac operators in various
settings, including on smooth manifolds
(\citet{Delanghe2001,Cnops2002}).

Recently, there has been growing interest in developing a theory of
\emph{discrete Clifford analysis}, based on lattice discretizations of
Dirac operators (\citet{FaKaSo2007,Faustino2009,BrDeSoVa2009}).  In
many respects, this work resembles the various lattice approaches to
discretizing exterior calculus
(\citet{DeHiLeMa2005,Harrison2005,BoHy2006}), particularly in the use
of primal-dual mesh pairs.  These approaches are closer in spirit to
finite difference methods than to finite element methods, in that they
focus more on the degrees of freedom themselves than on basis
functions and interpolants.  Consequently, these methods tend to be
less amenable to stability and convergence analysis, or to
higher-order discretizations, compared with the finite element
exterior calculus of \citet{ArFaWi2006,ArFaWi2010}.

However, scant attention has been given to the possibility of using a
mixed finite element approach to discretize Dirac-type operators and
their associated first-order PDEs. The present paper aims to fill this gap.

The paper is organized as follows:
\begin{enumerate}
\item In the remainder of this section, we briefly provide some
  background on the relationship between exterior calculus and
  Clifford analysis, which we illustrate with examples on
  $\mathbb{R}^2$ and $\mathbb{R}^3$.

\item In \autoref{sec:nilpotent}, we develop an abstract version of
  Hodge theory and the Hodge--Dirac operator, and prove well-posedness
  for the associated variational problem. Following
  \citet{AxMc2004,AxKeMc2006}, this approach replaces the exterior
  derivative by an arbitrary nilpotent operator on a Hilbert
  space. Despite the simplicity of this setup---even simpler than the
  Hilbert complexes considered by \citet{BrLe1992} and
  \citet{ArFaWi2010}---it retains all of the salient features needed
  for the analysis, including abstract versions of the Hodge
  decomposition and Poincar\'e inequality.

\item In \autoref{sec:discrete}, we consider the discretization of the
  variational problem for the abstract Hodge--Dirac operator, proving
  stability and \textit{a priori} error estimates. Moreover, given
  some additional assumptions on the nilpotent operator, we develop
  improved estimates based on a duality/$ L ^2
  $-lifting/Aubin--Nitsche-type argument.

\item In \autoref{sec:laplace}, we relate the abstract Hodge--Dirac
  operator to its square, the abstract Hodge--Laplace operator, and
  show that some of the key results of finite element exterior
  calculus (cf.~\citet{ArFaWi2010}) are recovered as corollaries of
  the estimates in \autoref{sec:nilpotent} and \autoref{sec:discrete}.

\item In \autoref{sec:complex}, we briefly discuss the correspondence
  between the nilpotent operator formalism, considered here, and the
  Hilbert complex formalism used by \citet{ArFaWi2010}. In particular,
  this correspondence implies that the elements used in finite element
  exterior calculus, e.g., the $ \mathcal{P} _r $ and $ \mathcal{P} _r
  ^- $ families of piecewise-polynomial differential forms on
  simplicial meshes and the $ \mathcal{S} _r $ family on cubical
  meshes (cf.~\citet{ArAw2013}), also yield a stable discretization of
  the Hodge--Dirac problem.

\item Finally, in \autoref{sec:numerical}, we provide a numerical
  application for the discretization of the Hodge--Dirac operator:
  computing a vector field with prescribed divergence and curl.

\end{enumerate}

\subsection{Motivating examples: exterior calculus and Clifford
  analysis}

In this section, we illustrate the relationship between the Euclidean
Hodge--Dirac operator for differential forms and certain Dirac
operators for Clifford algebra-valued functions.  In particular, we
focus on the important examples of $ \mathbb{R}^2 $ and $ \mathbb{R}^3
$, where the Clifford algebra (or its even subalgebra) can be
identified with the complex numbers $ \mathbb{C} $ or the quaternions
$ \mathbb{H} $.

\begin{example}[$\mathbb{R}^2$  and the Cauchy--Riemann equations]
  Let $ \Omega ( \mathbb{R}^2 ) = \bigoplus _{ k = 0 } ^2 \Omega ^k (
  \mathbb{R}^2 ) $ denote the graded vector space of smooth
  differential forms on $\mathbb{R}^2$, where $ \Omega ^k (
  \mathbb{R}^2 ) $ denotes the subspace of $k$-forms for $ k = 0, 1, 2
  $. A general element $ u \in \Omega ( \mathbb{R}^2 ) $ has the form
  \begin{equation*}
    u = u _0 + u _1 \,\mathrm{d}x _1 + u _2 \,\mathrm{d}x _2 + u _{ 1
      2 } \,\mathrm{d}x _1 \wedge \mathrm{d} x _2 .
  \end{equation*} 
  Letting $ \mathrm{d} $ be the ($+1$-graded) exterior derivative on $
  \Omega ( \mathbb{R}^2 ) $ and $ \mathrm{d} ^\ast $ be its
  ($-1$-graded) $ L ^2 $-adjoint, we have
  \begin{align*}
    \mathrm{d} u &= \partial _1 u _0 \,\mathrm{d}x _1 + \partial _2 u
    _0 \,\mathrm{d}x _2 + ( \partial _1 u _2 - \partial _2 u _1 )
    \,\mathrm{d}x _1 \wedge \mathrm{d} x _2,\\
    \mathrm{d} ^\ast u &= - ( \partial _1 u _1 + \partial _2 u _2 )
    + \partial _2 u _{ 1 2 } \,\mathrm{d}x _1 - \partial _1 u _{ 1 2 }
    \,\mathrm{d}x _2 .
  \end{align*}
  The \emph{Hodge--Dirac operator} (or Hodge--de~Rham operator) on $
  \Omega ( \mathbb{R}^2 ) $ is defined by $ \mathrm{D} = \mathrm{d} +
  \mathrm{d} ^\ast $, so that
  \begin{multline*}
    \mathrm{D} u = - ( \partial _1 u _1 + \partial _2 u _2 ) +
    ( \partial _1 u _0 + \partial _2 u _{ 1 2 } ) \,\mathrm{d}x _1 +
    ( \partial _2 u _0 - \partial _1 u _{ 1 2 } ) \,\mathrm{d}x _2 \\
    + ( \partial _1 u _2 - \partial _2 u _1 ) \,\mathrm{d}x _1 \wedge
    \mathrm{d} x _2 .
  \end{multline*}
  Note that $ \mathrm{D} $ is not a graded operator with respect to
  the integer grading of $ \Omega ( \mathbb{R}^2 ) $, since $
  \mathrm{d} $ is $+1$-graded and $ \mathrm{d} ^\ast $ is
  $-1$-graded. However, if we introduce the $ \mathbb{Z} _2 $ grading
  $ \Omega ( \mathbb{R}^2 ) = \Omega ^+ (\mathbb{R}^2) \oplus \Omega
  ^- (\mathbb{R}^2) $, where
  \begin{alignat*}{2}
    \Omega ^+ ( \mathbb{R}^2 ) &= \bigoplus _{ k \text{ even} } \Omega
    ^k (\mathbb{R}^2) &&= \Omega ^0 (
    \mathbb{R}^2  ) \oplus \Omega ^2 ( \mathbb{R}^2  ) ,\\
    \Omega ^- ( \mathbb{R}^2 ) &= \bigoplus _{ k \text{ odd} } \Omega
    ^k ( \mathbb{R}^2 ) &&= \Omega ^1 (\mathbb{R}^2 ),
  \end{alignat*}  
  then $ \mathrm{D} $ is an odd-graded operator, mapping $ \Omega ^+
  (\mathbb{R}^2 ) $ to $ \Omega ^- (\mathbb{R}^2 ) $ and vice versa.
  Restricting the Hodge--Dirac operator to the even-degree part, whose
  elements are $ u ^+ = u _0 + u _{ 1 2 } \,\mathrm{d}x _1 \wedge
  \mathrm{d} x _2 \in \Omega ^+ ( \mathbb{R}^2 ) $, we obtain
  \begin{equation*}
    \mathrm{D} u ^+ = ( \partial _1 u _0 + \partial _2 u _{ 1 2 } )
    \,\mathrm{d}x _1 + ( \partial _2 u _0 - \partial _1 u _{ 1 2 } )
    \,\mathrm{d}x _2 .
  \end{equation*} 
  Taking $ i = - \mathrm{d}x _1 \wedge \mathrm{d} x _2 $, we can thus
  identify $ u ^+ $ with the complex-valued function $ f = u _0 - u _{
    1 2 } i $, and it follows that $ \mathrm{D} u ^+ = 0 $ if and only
  if
  \begin{equation*}
    \partial _1 u _0 + \partial _2 u _{ 1 2 } = 0, \qquad \partial _2
    u _0 - \partial _1 u _{ 1 2 } = 0,
  \end{equation*} 
  i.e., $f$ is a solution to the Cauchy--Riemann system.

  These observations have direct analogs in the language of Clifford
  analysis. The Clifford algebra $ \mathrm{Cliff} _{ 0, 2 }
  (\mathbb{R} ) $ on $\mathbb{R}^2$ has elements of the form
  \begin{equation*}
    a = a _0 + a _1 e _1 + a _2 e _2 + a _{ 1 2 } e _1 e _2 , 
  \end{equation*} 
  subject to the algebraic relations $ e _1 ^2 = e _2 ^2 = - 1 $ and $
  e _1 e _2 = - e _2 e _1 $. This is isomorphic to the algebra of
  quaternions $ \mathbb{H} $, while the even subalgebra $
  \mathrm{Cliff} _{ 0, 2 } ^+ (\mathbb{R} ) $, containing only
  even-degree terms $ a^+ = a _0 + a _{ 1 2 } e _1 e _2 $, is
  isomorphic to $\mathbb{C}$ by taking $ i = - e _1 e _2 $. Hence, $
  \Omega ( \mathbb{R}^2 ) $ can be identified with $ \mathrm{Cliff} _{
    0, 2 } (\mathbb{R} ) $-valued (i.e., quaternion-valued) functions
  on $\mathbb{R}^2$, by mapping $ u \mapsto u _0 + u _1 e _1 + u _2 e
  _2 + u _{ 1 2 } e _1 e _2 $. Likewise, $ \Omega ^+ (\mathbb{R}^2 ) $
  can be identified with $ \mathrm{Cliff} _{ 0,2} ^+ (\mathbb{R} )
  $-valued (i.e., complex-valued) functions on
  $\mathbb{R}^2$. Finally, we note that the Hodge--Dirac operator
  corresponds to the usual Dirac operator $ e _1 \partial _1 + e
  _2 \partial _2 $ for $ \mathrm{Cliff} _{ 0,2} (\mathbb{R} ) $-valued
  functions, since
  \begin{multline*}
    ( e _1 \partial _1 + e _2 \partial _2 ) ( u _0 + u _1 e _1 + u _2
    e _2 + u _{ 1 2 } e _1 e _2 )
    = - ( \partial _1 u _1 + \partial _2 u _2 ) \\
    + ( \partial _1 u _0 + \partial _2 u _{ 1 2 } ) e _1 + ( \partial
    _2 u _0 - \partial _1 u _{ 1 2 } ) e _2 + ( \partial _1 u _2
    - \partial _2 u _1 ) e _1 e _2 ,
  \end{multline*}
  which agrees with the previous expression for $ \mathrm{D} u $.
\end{example}

\begin{example}[$\mathbb{R}^3$ and vector calculus]
  A general element $ u \in \Omega ( \mathbb{R}^3 ) $ has the form
  \begin{multline*}
    u = u _0 + u _1 \,\mathrm{d}x _1 + u _2 \,\mathrm{d}x _2 + u _3
    \,\mathrm{d}x _3 \\
    + u _{ 2 3 } \,\mathrm{d}x _2 \wedge \mathrm{d}x _3 + u _{ 3 1 }
    \,\mathrm{d}x _3 \wedge \mathrm{d}x _1 + u _{ 1 2 }
    \,\mathrm{d}x _1 \wedge \mathrm{d}x _2 \\
    + u _{ 1 2 3 } \,\mathrm{d}x _1 \wedge \mathrm{d}x _2 \wedge
    \mathrm{d}x _3 .
  \end{multline*} 
  Defining the odd vector field $ \mathbf{u} ^- = ( u _1 , u _2 , u _3
  ) $ and even vector field $ \mathbf{u} ^+ = ( u _{ 2 3 } , u _{ 3 1
  } , u _{ 1 2 } ) $, the element $u$ can be written in the simpler
  form
  \begin{equation*}
    u = u _0 + \mathbf{u} ^- \cdot \mathrm{d} \mathbf{l} + \mathbf{u} ^+
    \cdot \mathrm{d} \mathbf{S} + u _{ 1 2 3 } \,\mathrm{d}V .
  \end{equation*}
  This notation is evocative of the fact that $1$-, $2$-, and
  $3$-forms correspond, respectively, to line, surface, and volume
  integrals in vector calculus. The exterior derivative and its dual
  are then given by
  \begin{align*}
    \mathrm{d} u &= \operatorname{grad} u _0 \cdot \mathrm{d}
    \mathbf{l} + \operatorname{curl} \mathbf{u} ^- \cdot \mathrm{d}
    \mathbf{S} + \operatorname{div} \mathbf{u} ^+ \,\mathrm{d} V ,\\
    \mathrm{d} ^\ast u &= - \operatorname{div} \mathbf{u} ^-+
    \operatorname{curl} \mathbf{u} ^+ \cdot \mathrm{d} \mathbf{l} -
    \operatorname{grad} u _{ 1 2 3 } \cdot \mathrm{d} \mathbf{S} ,
  \end{align*} 
  so the Hodge--Dirac operator $ \mathrm{D} = \mathrm{d} + \mathrm{d}
  ^\ast $ is defined by
  \begin{equation*}
    \mathrm{D} u = - \operatorname{div} \mathbf{u} ^- + (
    \operatorname{grad} u _0 + \operatorname{curl} \mathbf{u} ^+ )
    \cdot \mathrm{d} \mathbf{l} + ( \operatorname{curl} \mathbf{u} ^-
    - \operatorname{grad} u _{ 1 2 3 } ) \cdot \mathrm{d} \mathbf{S} +
    \operatorname{div} \mathbf{u} ^+ \,\mathrm{d}V.
  \end{equation*}
  In particular, restricting to the even part $ u ^+ = u _0 +
  \mathbf{u} ^+ \cdot \mathrm{d} \mathbf{S} $ yields
  \begin{equation*}
    \mathrm{D} u ^+ = ( \operatorname{grad} u _0 + \operatorname{curl}
    \mathbf{u} ^+) \cdot \mathrm{d} \mathbf{l} + \operatorname{div}
    \mathbf{u} ^+ \,\mathrm{d}V .
  \end{equation*} 
  Identifying $ u ^+ $ with the quaternion-valued function $ f = u _0
  + u _{ 2 3 } i + u _{ 3 1 } j + u _{ 1 2 } k $, it follows that $
  \mathrm{D} u ^+ = 0 $ if and only if
  \begin{equation*}
    \operatorname{grad} u _0 + \operatorname{curl} \mathbf{u} ^+ = 0 ,
    \qquad \operatorname{div}
    \mathbf{u} ^+ = 0 ,
  \end{equation*} 
  i.e., $f$ is a solution to the so-called \emph{Moisil--Th\'eodorescu
    system} \citep{MoTh1931}, a Cauchy--Riemann-type system arising in
  quaternionic analysis.

  In the language of Clifford analysis, the algebra $ \mathrm{Cliff}
  _{ 0, 3 } (\mathbb{R} ) $ on $\mathbb{R}^3$ has elements of the form
  \begin{equation*}
    a = a _0 + a _1 e _1 + a _2 e _2 + a _3 e _3 + a _{ 2 3 } e _2 e _3 + a _{ 3 1
    } e _3 e _1 + a _{ 1 2 } e _1 e _2 + a _{ 1 2 3 } e _1 e _2 e _3  ,
  \end{equation*} 
  subject to the algebraic relations $ e _1 ^2 = e _2 ^2 = e _3 ^2 = -
  1 $, along with the anti-commutativity relations $ e _1 e _2 = - e
  _2 e _1 $, $ e _1 e _3 = - e _3 e _1 $, and $ e _2 e _3 = - e _3 e
  _2 $. This is isomorphic to the algebra of so-called
  \emph{split-biquaternions} $ \mathbb{H} \oplus \mathbb{H} $, while
  the even subalgebra $ \mathrm{Cliff} _{ 0, 3 } ^+ (\mathbb{R} ) $ is
  isomorphic to $ \mathbb{H} $. As in the previous example, there is a
  bijective correspondence between $ \mathrm{Cliff} _{ 0, 3 }
  (\mathbb{R} ) $-valued (resp., $ \mathrm{Cliff} _{ 0, 3 } ^+
  (\mathbb{R} ) $-valued) functions and $ \Omega ( \mathbb{R}^3 ) $
  (resp., $ \Omega ^+ ( \mathbb{R}^3 ) $), whereby the Dirac operator
  corresponds to the Hodge--Dirac operator.
\end{example}

\section{Hodge--Dirac theory for nilpotent operators}
\label{sec:nilpotent}

In this section, we develop an abstract version of Hodge theory, where
the exterior derivative is replaced by an arbitrary nilpotent operator
on a Hilbert space. This abstraction has appeared earlier, notably in
the papers \citet{AxMc2004,AxKeMc2006}. After introducing this basic
machinery, we study an abstract Hodge--Dirac operator and its
associated variational problem, for which we prove a well-posedness
result.

\subsection{Nilpotent operators and the abstract Hodge decomposition}

\begin{definition}
  \label{def:nilpotent}
  Let $ \mathrm{d} $ be a closed, densely-defined linear operator on a
  (real) Hilbert space $W$, with domain $ \mathcal{D} (\mathrm{d}) $,
  range $ \mathcal{R} (\mathrm{d}) $, and kernel $ \mathcal{N}
  (\mathrm{d}) $.  Then we say that $ \mathrm{d} $ is
  \begin{enumerate}[(i)]
  \item \emph{nilpotent} if $ \mathcal{R}
    (\mathrm{d}) \subset \mathcal{N} (\mathrm{d}) $, so that $
    \mathrm{d} ^2 = 0 $;
  \item \emph{closed-nilpotent} if, in addition to (i), $
    \mathcal{R} (\mathrm{d}) $ is closed;
  \item \emph{Fredholm-nilpotent} if, in addition to (i)--(ii), $
    \mathcal{R} (\mathrm{d}) $ has finite codimension in $ \mathcal{N}
    (\mathrm{d}) $;
  \item \emph{diffuse Fredholm-nilpotent} if, in addition to
    (i)--(iii), $ \mathcal{D} (\mathrm{d}) \cap \mathcal{N}
    (\mathrm{d}) ^\perp $ is compact in $ \mathcal{N} (\mathrm{d})
    ^\perp $.
  \end{enumerate}
\end{definition}

Given a nilpotent operator $ \mathrm{d} $ on a Hilbert space $W$,
define the subspaces
\begin{equation*}
  \mathfrak{Z}  = \mathcal{N} ( \mathrm{d} ) , \qquad
  \mathfrak{B}  = \mathcal{R} (\mathrm{d}) \subset
  \mathfrak{Z}  , \qquad \mathfrak{H} = \mathfrak{Z}  \cap
  \mathfrak{B}  ^{\perp _W }.
\end{equation*} 
The notation for these spaces originates from homology theory: $
\mathfrak{B} $ stands for \emph{boundary}, $ \mathfrak{Z} $ stands for
\emph{cycle} (German: \emph{Zyklus}), and $ \mathfrak{H} $ stands
for \emph{harmonic}.  It follows immediately that $W$ has the
orthogonal decomposition
\begin{equation*}
  W = \mathfrak{Z}  \oplus \mathfrak{Z}  ^{ \perp _W } = \mathfrak{Z}
  \cap  ( \overline{ \mathfrak{B}  } \oplus \mathfrak{B}  ^{
      \perp _W } ) \oplus \mathfrak{Z}  ^{ \perp _W } =
  \overline{ \mathfrak{B}  } \oplus \mathfrak{H}  \oplus \mathfrak{Z}
  ^{ \perp _W } ,
\end{equation*} 
which we call the \emph{abstract Hodge decomposition} of $W$.  The
adjoint $ \mathrm{d} ^\ast $ is also a nilpotent operator, so we can
define the corresponding subspaces $ \mathfrak{Z} ^\ast = \mathcal{N}
( \mathrm{d} ^\ast ) $ and $ \mathfrak{B} ^\ast = \mathcal{R}
(\mathrm{d} ^\ast ) \subset \mathfrak{Z} ^\ast $.  However, since $
\mathrm{d} $ and $ \mathrm{d} ^\ast $ are adjoints, it is possible to
write
\begin{equation*}
  \mathfrak{Z}  ^\ast = \mathfrak{B}  ^{ \perp _W } , \qquad
  \overline{ \mathfrak{B}  ^\ast } = \mathfrak{Z}  ^{ \perp _W } ,
  \qquad  \mathfrak{H}  = \mathfrak{Z}  \cap \mathfrak{Z}  ^\ast .
\end{equation*} 
Hence, the Hodge decomposition has the alternative form,
\begin{equation*}
  W = \overline{ \mathfrak{B}  } \oplus \mathfrak{H}  \oplus
  \overline{   \mathfrak{B}  ^\ast } .
\end{equation*} 
When $ \mathrm{d} $ is closed-nilpotent, so is $ \mathrm{d} ^\ast $,
and so $ \mathfrak{B} $ and $ \mathfrak{B} ^\ast $ are closed
subspaces.  In this case, the Hodge decomposition becomes
\begin{equation*}
  W = \mathfrak{B}  \oplus \mathfrak{H}  \oplus \mathfrak{Z}  ^{ \perp
    _W } = \mathfrak{B}  \oplus \mathfrak{H}  \oplus \mathfrak{B}
  ^\ast .
\end{equation*} 
Finally, note that $ \mathrm{d} $ is (diffuse) Fredholm-nilpotent if
and only if $ \mathrm{d} ^\ast $ is.

The operator $ \mathrm{d} $ is generally unbounded on $W$.  However,
we can equip the dense domain $ V = \mathcal{D} (\mathrm{d})
\subset W $ with its own Hilbert space structure, so that $ \mathrm{d}
$ is a bounded operator on $V$.  Denoting the inner
product on $W$ by $ \langle \cdot, \cdot \rangle $, let $V$
be endowed with the graph inner product,
\begin{equation*}
  \langle u, v \rangle _V = \langle u, v \rangle + \langle \mathrm{d}
  u, \mathrm{d} v \rangle, \qquad \forall u , v \in V . 
\end{equation*} 
Since $ \mathrm{d} $ is a closed operator, its graph is closed, and
hence $V$ is complete with respect to the norm $ \lVert \cdot \rVert
_V $ induced by the inner product.  The operator $ \mathrm{d} $ is
bounded (in fact, nonexpansive) with respect to the $V$-norm, since
\begin{equation*}
  \lVert \mathrm{d} v \rVert ^2 _V = \lVert \mathrm{d} v \rVert ^2
  \leq \lVert v \rVert ^2 + \lVert \mathrm{d} v \rVert ^2 = \lVert v
  \rVert _V ^2 .
\end{equation*} 
Since $ \mathfrak{B} $ and $ \mathfrak{Z} $ are both in $V$, we again
obtain an abstract Hodge decomposition,
\begin{equation*}
  V = \overline{ \mathfrak{B}  } \oplus \mathfrak{H}  \oplus
  \mathfrak{Z}  ^\perp,
\end{equation*} 
where $ \mathfrak{Z} ^\perp = \mathfrak{Z} ^{ \perp _W } \cap
V $.  Finally, $ \mathrm{d} $ is closed- or Fredholm-nilpotent on $V$
if and only if it is on $W$, and in this case we have
\begin{equation*}
  V = \mathfrak{B}  \oplus \mathfrak{H}  \oplus \mathfrak{Z}  ^\perp ,
\end{equation*} 
as before.  Given $ v \in V $, we denote the components of its Hodge
decomposition by $ v = v _{ \mathfrak{B} } + v _{ \mathfrak{H} } + v
_{ \perp } $.

The Poincar\'e inequality is one of the most important corollaries of
the Hodge decomposition, from an analytical standpoint.  This was
demonstrated in \citet{ArFaWi2010} in the abstract setting of closed
Hilbert complexes; in the special case of the de~Rham complex, one
obtains the classical Poincar\'e inequality.  By the same argument,
one also obtains an abstract Poincar\'e inequality for
closed-nilpotent operators.

\begin{lemma}[Poincar\'e inequality]
  \label{lem:poincare}
  If $ \mathrm{d} $ is a closed-nilpotent operator, then there exists
  a constant $ c _P \geq 1 $, called the Poincar\'e constant, such
  that
  \begin{equation*}
    \lVert v \rVert _V \leq c _P \lVert \mathrm{d} v \rVert , \qquad
    \forall v \in \mathfrak{Z}  ^\perp .
  \end{equation*} 
\end{lemma}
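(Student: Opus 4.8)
The plan is to realize $\mathrm{d}$, restricted to $\mathfrak{Z}^\perp$, as a bounded linear bijection onto its range, and then to invoke the bounded inverse theorem (the Banach-space open mapping theorem); the asserted inequality is precisely the statement that the inverse of this restriction is bounded. First I would show that $\mathrm{d}$ maps $\mathfrak{Z}^\perp$ bijectively onto $\mathfrak{B}$. Injectivity is immediate: if $v \in \mathfrak{Z}^\perp$ satisfies $\mathrm{d} v = 0$, then $v \in \mathcal{N}(\mathrm{d}) = \mathfrak{Z}$, whence $v \in \mathfrak{Z} \cap \mathfrak{Z}^\perp = \{0\}$. For surjectivity I would use the Hodge decomposition $V = \mathfrak{B} \oplus \mathfrak{H} \oplus \mathfrak{Z}^\perp$, which is available here because $\mathrm{d}$ is closed-nilpotent. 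Since $\mathfrak{B}$ and $\mathfrak{H}$ both lie in $\mathfrak{Z} = \mathcal{N}(\mathrm{d})$, the operator $\mathrm{d}$ annihilates them, so $\mathcal{R}(\mathrm{d}) = \mathrm{d}(V) = \mathrm{d}(\mathfrak{Z}^\perp)$; that is, $\mathrm{d}|_{\mathfrak{Z}^\perp} \colon \mathfrak{Z}^\perp \to \mathfrak{B}$ is onto.

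Next I would verify the hypotheses needed to apply the bounded inverse theorem. The domain $\mathfrak{Z}^\perp$ is closed in $(V, \lVert \cdot \rVert_V)$, being one of the orthogonal summands of the Hodge decomposition of $V$ (equivalently, the $V$-orthogonal complement of the closed subspace $\mathfrak{Z} = \mathcal{N}(\mathrm{d})$, which is closed since $\mathrm{d}\colon V \to W$ is continuous), and is therefore complete. The target $\mathfrak{B} = \mathcal{R}(\mathrm{d})$ is closed in $W$ by the closed-nilpotent hypothesis, hence complete in $\lVert \cdot \rVert$. The map is bounded because $\lVert \mathrm{d} v \rVert \leq \lVert v \rVert_V$, as already observed ($\mathrm{d}$ is nonexpansive in the graph norm). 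The bounded inverse theorem then produces a constant $c_P$ with $\lVert v \rVert_V \leq c_P \lVert \mathrm{d} v \rVert$ for all $v \in \mathfrak{Z}^\perp$. Finally, the very same inequality $\lVert \mathrm{d} v \rVert \leq \lVert v \rVert_V$ shows that any admissible constant must satisfy $c_P \geq 1$, so we may take $c_P \geq 1$ as claimed.

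The crux of the argument—and the only place the full strength of the hypothesis is used—is the closedness of the range $\mathfrak{B} = \mathcal{R}(\mathrm{d})$. Without it, $\mathrm{d}|_{\mathfrak{Z}^\perp}$ would map onto a merely dense subspace of $\overline{\mathfrak{B}}$, the target would fail to be complete, and the open mapping theorem would not apply; indeed, the Poincar\'e inequality genuinely fails in that situation. Thus the closed-nilpotent assumption is exactly what makes the inversion bounded, and I expect establishing (or correctly invoking) this closedness to be the essential step, with the bijectivity and the verification of boundedness being routine consequences of the Hodge decomposition and the graph-norm estimate.
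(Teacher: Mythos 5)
Your proposal is correct and follows essentially the same route as the paper: realize $\mathrm{d}\rvert_{\mathfrak{Z}^\perp}$ as a bounded bijection onto the closed subspace $\mathfrak{B}$ and apply Banach's bounded inverse theorem, with $c_P \geq 1$ coming from the nonexpansiveness of $\mathrm{d}$ in the graph norm. You simply spell out the bijectivity and completeness checks that the paper leaves implicit.
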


\begin{proof}
  The linear map $ \mathrm{d} $ restricts to a bounded bijection
  between $ \mathfrak{Z} ^\perp $ and $ \mathfrak{B} $, both of which
  are closed subspaces of $V$.  Hence, Banach's bounded inverse
  theorem (a standard corollary of the open mapping theorem) implies
  that $ \mathrm{d} \rvert _{ \mathfrak{Z} ^\perp } $ has a bounded
  inverse, which proves the result. (The bound $c_P \geq 1$ is a
  result of the nonexpansiveness of $\mathrm{d}$ with respect to the
  $V$ norm.)
\end{proof}

\subsection{The abstract Hodge--Dirac operator}

Having defined abstract versions of the exterior derivative and Hodge
decomposition, we are now prepared to define the abstract Hodge--Dirac
operator.

\begin{definition}
  Given a Hilbert space $W$ with a nilpotent operator $ \mathrm{d} $
  and adjoint $ \mathrm{d} ^\ast $, the \emph{abstract Hodge--Dirac
    operator} is $ \mathrm{D} = \mathrm{d} + \mathrm{d} ^\ast $.
\end{definition}

The Hodge--Dirac operator inherits several of the properties of $
\mathrm{d} $ and $ \mathrm{d} ^\ast $.  Like those operators, it is
closed and densely-defined, and its domain, range, and kernel are
given by
\begin{equation*}
  \mathcal{D} (\mathrm{D}) = \mathcal{D} (\mathrm{d}) \cap \mathcal{D}
  (\mathrm{d} ^\ast ), \qquad \mathcal{R} (\mathrm{D}) = \mathfrak{B}
  \oplus \mathfrak{B}  ^\ast , \qquad \mathcal{N} (\mathrm{D}) =
  \mathfrak{Z}  \cap \mathfrak{Z}  ^\ast = \mathfrak{H}  .
\end{equation*} 
Moreover, $ \mathrm{D} $ has closed range if and only if $ \mathrm{d}
$ is closed-nilpotent, and is (diffuse) Fredholm if and only if $
\mathrm{d} $ is (diffuse) Fredholm-nilpotent (cf.~\citet[Propositions
3.5 and 3.11]{AxMc2004}).  Unlike $ \mathrm{d} $ and $ \mathrm{d}
^\ast $, though, $ \mathrm{D} $ is self-adjoint and is \emph{not}
nilpotent; in fact, its square is
\begin{equation*}
  \mathrm{D} ^2 = \mathrm{d} \mathrm{d} ^\ast + \mathrm{d} ^\ast
  \mathrm{d} = L ,
\end{equation*} 
which is called the \emph{abstract Hodge--Laplace operator}.

The Hodge decomposition implies that $ W = \overline{ \mathcal{R}
  (\mathrm{D}) } \oplus \mathcal{N} (\mathrm{D}) $.  In particular,
when $ \mathrm{d} $ is closed-nilpotent, $ \mathcal{R} (\mathrm{D}) $
is closed, so the Hodge decomposition is $ W = \mathcal{R}
(\mathrm{D}) \oplus \mathcal{N} (\mathrm{D}) $.  In this latter case,
the Hodge decomposition is simply an expression of the closed range
theorem for the self-adjoint operator $ \mathrm{D} $.  This
decomposition makes it natural to pose the following problem: Given $
f \in W $, find $ (u,p) \in \bigl( \mathcal{D} (D) \cap \mathcal{N}
(\mathrm{D}) ^\perp \bigr) \oplus \mathcal{N} (\mathrm{D}) $
satisfying
\begin{equation*}
  \mathrm{D} u + p = f .
\end{equation*} 
The solution to this problem gives the Hodge decomposition $ f =
\mathrm{d} u + \mathrm{d} ^\ast u + p $.

We now consider the associated variational problem: Find $ ( u, p )
\in V \times \mathfrak{H} $ such that
\begin{equation}
  \label{eqn:hodgeDirac}
\begin{alignedat}{2}
  \langle \mathrm{d} u, v \rangle + \langle u, \mathrm{d} v \rangle +
  \langle p, v \rangle &= \langle f, v \rangle , \quad &\forall v &\in
  V ,\\
  \langle u, q \rangle &= 0 , \quad &\forall q &\in \mathfrak{H} .
\end{alignedat}
\end{equation}
If we define the bilinear form $ B \colon ( V \times \mathfrak{H} )
\times ( V \times \mathfrak{H} ) \rightarrow \mathbb{R} $ to be
\begin{equation*}
  B ( u, p; v, q ) = \langle \mathrm{d} u, v \rangle + \langle u ,
  \mathrm{d} v \rangle + \langle p, v \rangle + \langle u, q \rangle ,
\end{equation*} 
then the variational problem can be rewritten as: Find $ ( u, p ) \in
V \times \mathfrak{H} $ such that
\begin{equation*}
  B ( u, p; v , q ) = \langle f, v \rangle ,
  \quad \forall ( v, q ) \in V \times \mathfrak{H}  .
\end{equation*} 
Note that $B$ is bounded (by a straightforward application of the
Cauchy--Schwarz inequality) and symmetric. Therefore, in order to
establish the well-posedness of this problem, it suffices to prove the
``inf-sup condition''
\begin{equation*}
  \gamma = \infsup{ (0,0) \neq ( u, p ) \in V \times \mathfrak{H}  }{
    (0,0) \neq ( v, q ) \in V \times \mathfrak{H}  } \frac{
    \bigl\lvert B ( u, p; v , q ) \bigr\rvert }{ \bigl\lVert ( u, p )
    \bigr\rVert _{ V \times \mathfrak{H}  } \bigl\lVert ( v, q )
    \bigr\rVert _{ V \times \mathfrak{H}  } } > 0 ,
\end{equation*} 
which is implied by the following theorem. (Compare \citet[Theorem
3.2]{ArFaWi2010}.)

\begin{theorem}
  \label{thm:infSup}
  Suppose $ \mathrm{d} $ is a closed-nilpotent operator on a Hilbert
  space $W$ with dense domain $ V \subset W $.  Then there exists a
  constant $ \gamma > 0 $, depending only on the Poincar\'e constant $
  c _P $, such that for all nonzero $ ( u, p ) \in V \times
  \mathfrak{H} $, there exists a nonzero $ ( v, q ) \in V \times
  \mathfrak{H} $ satisfying
  \begin{equation*}
    B ( u, p ; v, q ) \geq \gamma \bigl(  \lVert u
    \rVert _V + \lVert p \rVert \bigr) \bigl( 
    \lVert v \rVert _V + \lVert q \rVert \bigr) .
  \end{equation*} 
\end{theorem}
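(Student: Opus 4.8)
The plan is to verify the inf-sup condition constructively: because $B$ is symmetric and bounded, it suffices to exhibit, for each nonzero $(u,p) \in V \times \mathfrak{H}$, an explicit test pair $(v,q)$ for which $B(u,p;v,q)$ controls the product of norms. First I would split $u$ according to the abstract Hodge decomposition $V = \mathfrak{B} \oplus \mathfrak{H} \oplus \mathfrak{Z}^\perp$, writing $u = u_{\mathfrak{B}} + u_{\mathfrak{H}} + u_\perp$. Recall this splitting is orthogonal in both $W$ and $V$, so that $\lVert u \rVert_V^2 = \lVert u_{\mathfrak{B}} \rVert^2 + \lVert u_{\mathfrak{H}} \rVert^2 + \lVert u_\perp \rVert_V^2$. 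The guiding idea is to choose the components of $(v,q)$ so that each summand of $B$ isolates exactly one piece of this norm, together with $\lVert p \rVert$.

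Concretely, I would set $v = v_1 + \mathrm{d} u + p$ and $q = u_{\mathfrak{H}}$, where $v_1 \in \mathfrak{Z}^\perp$ is the unique preimage of $u_{\mathfrak{B}}$ under the bounded bijection $\mathrm{d} \rvert_{\mathfrak{Z}^\perp} \colon \mathfrak{Z}^\perp \to \mathfrak{B}$ furnished by the Poincar\'e lemma, so that $\lVert v_1 \rVert_V \leq c_P \lVert u_{\mathfrak{B}} \rVert$. The key step is then a direct computation of $B(u,p;v,q)$. Using that $\mathrm{d} u \in \mathfrak{B}$, that $\mathrm{d}(\mathrm{d} u) = 0$ and $\mathrm{d} p = 0$, and the mutual orthogonality of $\mathfrak{B}$, $\mathfrak{H}$, and $\mathfrak{Z}^\perp$, all cross terms vanish and one is left with
\[
  B(u,p;v,q) = \lVert u_{\mathfrak{B}} \rVert^2 + \lVert \mathrm{d} u \rVert^2 + \lVert u_{\mathfrak{H}} \rVert^2 + \lVert p \rVert^2 .
\]
Here the identity $\langle u, \mathrm{d} v_1 \rangle = \langle u, u_{\mathfrak{B}} \rangle = \lVert u_{\mathfrak{B}} \rVert^2$ is what makes the construction succeed: inverting $\mathrm{d}$ on the boundary component lets the $\langle u, \mathrm{d} v \rangle$ term recover $\lVert u_{\mathfrak{B}} \rVert^2$, while $\langle \mathrm{d} u, \mathrm{d} u \rangle$ recovers $\lVert \mathrm{d} u \rVert^2$, and $p$ and $q = u_{\mathfrak{H}}$ recover $\lVert p \rVert^2$ and $\lVert u_{\mathfrak{H}} \rVert^2$ through the $\langle p, v \rangle$ and $\langle u, q \rangle$ terms.

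It remains to convert this identity into the stated bound. Since $c_P \geq 1$, the Poincar\'e inequality gives $\lVert u_\perp \rVert_V \leq c_P \lVert \mathrm{d} u \rVert$, so the right-hand side above is bounded below by $c_P^{-2}(\lVert u \rVert_V^2 + \lVert p \rVert^2)$, hence by a fixed multiple of $c_P^{-2}(\lVert u \rVert_V + \lVert p \rVert)^2$ via $a^2 + b^2 \geq \tfrac12(a+b)^2$. For the test pair, the triangle inequality together with $\lVert v_1 \rVert_V \leq c_P \lVert u_{\mathfrak{B}} \rVert$, $\lVert \mathrm{d} u \rVert_V = \lVert \mathrm{d} u \rVert$ (as $\mathrm{d}\,\mathrm{d} u = 0$), and $\lVert p \rVert_V = \lVert p \rVert$ yields $\lVert v \rVert_V + \lVert q \rVert \leq C c_P (\lVert u \rVert_V + \lVert p \rVert)$ with $C$ an absolute constant. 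Combining the two estimates produces the claim with $\gamma$ a fixed multiple of $c_P^{-3}$, depending only on $c_P$; the pair $(v,q)$ is nonzero since $B(u,p;v,q) > 0$ whenever $(u,p) \neq 0$. I expect the only genuine obstacle to be the construction and norm control of $v_1$, i.e.\ inverting $\mathrm{d}$ on the boundary space, which is precisely where closed range (the Poincar\'e lemma) enters and where the dependence on $c_P$ originates; the remaining verifications are routine orthogonality bookkeeping.
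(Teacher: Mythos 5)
Your proposal is correct and follows essentially the same route as the paper: the identical test pair $v = \rho + p + \mathrm{d}u$ (your $v_1$ is the paper's $\rho$), $q = u_{\mathfrak{H}}$, the same orthogonality computation yielding $B(u,p;v,q) = \lVert \mathrm{d}u \rVert^2 + \lVert u_{\mathfrak{B}} \rVert^2 + \lVert u_{\mathfrak{H}} \rVert^2 + \lVert p \rVert^2$, and the same use of the Poincar\'e inequality to recover $\lVert u_\perp \rVert$ from $\lVert \mathrm{d}u \rVert$. The only differences are cosmetic bookkeeping in the final constants.
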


\begin{notation}
  In the following proof, and in the remainder of this paper, we
  follow the common practice of letting $C$ denote an unspecified,
  positive constant, whose value may differ with each occurrence (even
  within the same proof).
\end{notation}

\begin{proof}
  Take the test functions
  \begin{equation*}
    v = \rho + p + \mathrm{d} u , \qquad q = u _{ \mathfrak{H}  } ,
  \end{equation*} 
  where $ \rho \in \mathfrak{Z}  ^\perp $ is the unique element such
  that $ \mathrm{d} \rho = u _{ \mathfrak{B}  } $.  Using the
  Poincar\'e inequality and the orthogonality of the Hodge
  decomposition, observe that
  \begin{align*}
    \lVert v \rVert _V + \lVert q \rVert &\leq \lVert \rho \rVert _V +
    \lVert p \rVert + \lVert \mathrm{d} u \rVert + \lVert u _{
      \mathfrak{H}  } \rVert \\
    &\leq c _P \lVert u _{ \mathfrak{B} } \rVert + \lVert u _{
      \mathfrak{H} } \rVert + \lVert
    \mathrm{d} u \rVert + \lVert p \rVert \\
    &\leq C \bigl( \lVert u \rVert _V + \lVert p \rVert \bigr) .
  \end{align*}
  Next, substituting these test functions into the bilinear form,
  \begin{align*}
    B ( u, p; v, q ) &= \lVert \mathrm{d} u \rVert ^2 + \langle u , u
    _{ \mathfrak{B} } \rangle + \lVert p \rVert ^2 + \langle u, u
    _{ \mathfrak{H}  } \rangle \\
    &= \lVert \mathrm{d} u \rVert ^2 + \lVert u _{ \mathfrak{B} }
    \rVert ^2 + \lVert u _{ \mathfrak{H} } \rVert ^2 + \lVert p \rVert
    ^2 \\
    &= \frac{1}{2} \lVert \mathrm{d} u \rVert ^2 + \frac{1}{2} \lVert
    \mathrm{d} u _{ \perp } \rVert ^2 + \lVert u _{ \mathfrak{B} }
    \rVert ^2 + \lVert u _{ \mathfrak{H} } \rVert ^2 + \lVert p \rVert
    ^2
    \\
    &\geq \frac{1}{2} \lVert \mathrm{d} u \rVert ^2 + \frac{1}{2 c _P
      ^2 } \lVert u _{ \perp } \rVert ^2 + \lVert u _{ \mathfrak{B} }
    \rVert ^2 + \lVert u _{ \mathfrak{H} } \rVert ^2 + \lVert p \rVert
    ^2
    \\
    &\geq \frac{ 1 }{ 2 c _P ^2 } \bigl( \lVert u \rVert ^2 _V +
    \lVert p \rVert ^2 \bigr),
  \end{align*}
  where the last inequality follows from $ c_P \geq 1 $ and the Hodge
  decomposition.  Combining this with the previous inequality, we
  therefore obtain
  \begin{equation*}
    B ( u, p; v, q ) \geq \gamma \bigl( \lVert u \rVert _V + \lVert p
    \rVert \bigr) \bigl( \lVert v \rVert _V + \lVert q \rVert \bigr),
  \end{equation*} 
  as claimed. 
\end{proof}

\begin{corollary}
  \label{cor:hodgeDiracWellPosed}
  The variational problem for the abstract Hodge--Dirac operator is
  well-posed.  That is, there exists a constant $ c $, depending
  only on the Poincar\'e constant $ c _P $, such that for all $ f \in
  W $, the problem \eqref{eqn:hodgeDirac} has a unique solution $
  ( u, p ) \in V \times \mathfrak{H} $, which satisfies the
  estimate
  \begin{equation*}
    \lVert u \rVert _V + \lVert p \rVert \leq c \lVert f \rVert .
  \end{equation*}
\end{corollary}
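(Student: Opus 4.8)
The plan is to deduce the corollary directly from the inf-sup condition established in Theorem~\ref{thm:infSup}, via the standard well-posedness theory for variational problems (the Banach--N\'ecas--Babu\v{s}ka theorem, i.e., the generalized Lax--Milgram theorem). First I would record that the right-hand side defines a bounded linear functional $ F ( v, q ) = \langle f, v \rangle $ on $ V \times \mathfrak{H} $: by Cauchy--Schwarz, $ \lvert F ( v, q ) \rvert \leq \lVert f \rVert \, \lVert v \rVert \leq \lVert f \rVert \, \lVert v \rVert _V \leq \lVert f \rVert \, \lVert ( v, q ) \rVert _{ V \times \mathfrak{H} } $, so that $ \lVert F \rVert _{ ( V \times \mathfrak{H} )' } \leq \lVert f \rVert $. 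Thus the variational problem \eqref{eqn:hodgeDirac} is exactly the problem of finding $ ( u, p ) \in V \times \mathfrak{H} $ with $ B ( u, p ; v, q ) = F ( v, q ) $ for all $ ( v, q ) $.

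Next I would recast Theorem~\ref{thm:infSup} as the usual inf-sup bound. The theorem produces, for each nonzero $ ( u, p ) $, a nonzero $ ( v, q ) $ with $ B ( u, p ; v, q ) \geq \gamma ( \lVert u \rVert _V + \lVert p \rVert ) ( \lVert v \rVert _V + \lVert q \rVert ) \geq \gamma \lVert ( u, p ) \rVert _{ V \times \mathfrak{H} } \lVert ( v, q ) \rVert _{ V \times \mathfrak{H} } $, using that the sum norm dominates the Hilbert product norm; dividing through and taking the supremum over $ ( v, q ) $ shows that the inf-sup quantity $ \gamma $ defined just before the theorem is bounded below by a positive constant depending only on $ c _P $. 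Since $B$ is symmetric and the trial and test spaces coincide, the ``transpose'' nondegeneracy hypothesis of the Banach--N\'ecas--Babu\v{s}ka theorem---that for each nonzero $ ( v, q ) $ there is some $ ( u, p ) $ with $ B ( u, p; v, q ) \neq 0 $---is an immediate consequence of this same condition, so both hypotheses are in hand.

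The Banach--N\'ecas--Babu\v{s}ka theorem then yields a unique solution $ ( u, p ) \in V \times \mathfrak{H} $ together with the bound $ \lVert ( u, p ) \rVert _{ V \times \mathfrak{H} } \leq \gamma ^{ -1 } \lVert F \rVert _{ ( V \times \mathfrak{H} )' } \leq \gamma ^{ -1 } \lVert f \rVert $; converting back from the Hilbert product norm to the sum norm (at the cost of a universal factor $ \sqrt{2} $) gives $ \lVert u \rVert _V + \lVert p \rVert \leq c \lVert f \rVert $ with $ c = \sqrt{2}\, \gamma ^{ -1 } $, which depends only on $ c _P $. I do not expect any serious obstacle, since the substantive work was done in Theorem~\ref{thm:infSup}; the only points requiring care are the bookkeeping of the norm equivalences on the product space and the (automatic, by symmetry) verification of the second Banach--N\'ecas--Babu\v{s}ka hypothesis. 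This mirrors the analogous deduction in \citet{ArFaWi2010}.
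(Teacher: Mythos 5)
Your proposal is correct and follows exactly the route the paper takes: the paper's proof is a one-line appeal to the inf-sup condition of Theorem~\ref{thm:infSup} together with Babu\v{s}ka's theorem, and you have simply filled in the standard details (boundedness of the functional, symmetry giving the transpose condition, and the norm bookkeeping on the product space). No discrepancies.
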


\begin{proof}
  This follows directly from the inf-sup condition, cf.~\citet{Babuska1971}.
\end{proof}

\section{Numerical stability and convergence of a discrete problem}
\label{sec:discrete}

In this section, we discuss the approximation of the Hodge--Dirac
variational problem \eqref{eqn:hodgeDirac} on a closed subspace $ V _h
\subset V $. We will often refer to this as the \emph{discrete
  Hodge--Dirac problem}, since $ V _h $ is typically a
finite-dimensional subspace obtained by some discretization process,
e.g., finite-element discretization.  First, in
\autoref{sec:projection}, we discuss some additional structure that
must be assumed---crucially, as in \citet{ArFaWi2010}, we require the
existence of a bounded projection, which along with the inclusion map
$ V _h \hookrightarrow V $ must commute with the differentials---and
the consequences of this additional structure. Next, in
\autoref{sec:estimates}, we introduce the discrete problem and prove
stability and convergence estimates.  Finally, in
\autoref{sec:improved}, we give improved estimates for the case where
$ \mathrm{d} $ is not merely closed-nilpotent but diffuse
Fredholm-nilpotent.

\subsection{Approximation by a subspace with a bounded commuting
  projection}
\label{sec:projection}

Let $ V _h \subset V $ be a closed (e.g., finite-dimensional) subspace
of $V$, such that $ \mathrm{d} V _h \subset V _h $.  If $ \mathrm{d} $
is closed-nilpotent on $V$, then the restriction $ \mathrm{d} _h =
\mathrm{d} \rvert _{ V _h } $ is closed-nilpotent on $ V _h $, and
this induces an abstract Hodge decomposition $ V _h = \mathfrak{B} _h
\oplus \mathfrak{H} _h \oplus \mathfrak{Z} _h ^\perp $. Note that,
although $ \mathrm{d} _h $ is the restriction of $ \mathrm{d} $ to $ V
_h $, its adjoint $ \mathrm{d} _h ^\ast $ with respect to the
$W$-inner product is generally \emph{not} the restriction of $
\mathrm{d} ^\ast $; consequently, we have $ \mathfrak{B} _h \subset
\mathfrak{B} $ and $ \mathfrak{Z} _h \subset \mathfrak{Z} $, but
generally $ \mathfrak{H} _h \not\subset \mathfrak{H} $ and $
\mathfrak{Z} _h ^\perp \not\subset \mathfrak{Z} ^\perp $.

We now make one additional assumption: suppose also that there exists
a bounded projection $ \pi _h \in \mathcal{L} ( V , V _h ) $ such that
$ \pi _h \mathrm{d} v = \mathrm{d} \pi _h v $ for all $ v \in V $. (By
``projection,'' we mean only that $ \pi _h $ is idempotent and
surjective onto $ V _h $, not that it is an orthogonal projection.)
It is nontrivial to show that such projections exist, so their
explicit construction for the Hodge--de~Rham complex
(cf.~\citet{ChWi2008,FaWi2013}) was a major technical advance in
finite element exterior calculus.  The importance of this assumption
is that it allows us to control the Poincar\'e constant of $ V _h $ in
terms of $ c _P $ and $ \lVert \pi _h \rVert $, as shown in the
following lemma (essentially similar to \citet[Theorem
3.6]{ArFaWi2010}).

\begin{lemma}
  \label{lem:discretePoincare}
  Let $ \mathrm{d} $ be a closed-nilpotent operator on a Hilbert space
  $W$ with dense domain $ V \subset W $, and suppose that $ V _h
  \subset V $ is a closed subspace with a bounded commuting projection
  $ \pi _h \in \mathcal{L} ( V, V _h ) $ such that $ \pi _h \mathrm{d}
  = \mathrm{d} \pi _h $. Then
  \begin{equation*}
    \lVert v _h \rVert _V \leq c _P \lVert \pi _h \rVert \lVert
    \mathrm{d} v _h \rVert , \qquad \forall v _h \in \mathfrak{Z}  _h ^\perp .
  \end{equation*} 
  In other words, the Poincar\'e constant of $ V _h $ is bounded by $
  c _P \lVert \pi _h \rVert $.
\end{lemma}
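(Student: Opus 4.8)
The plan is to transfer the continuous Poincaré inequality (\autoref{lem:poincare}) into the subspace $V_h$ by using the commuting projection $\pi_h$ to manufacture a discrete competitor for $v_h$ out of a continuous one. Fix $v_h \in \mathfrak{Z}_h^\perp$. Since $\mathrm{d} V_h \subset V_h$, its image $\mathrm{d} v_h$ lies in $\mathfrak{B}_h \subset \mathfrak{B}$, hence in the range of the continuous operator $\mathrm{d}$. Because $\mathrm{d}$ restricts to a bijection from $\mathfrak{Z}^\perp$ onto $\mathfrak{B}$ (as established in the proof of \autoref{lem:poincare}), there is a unique $\rho \in \mathfrak{Z}^\perp$ with $\mathrm{d}\rho = \mathrm{d} v_h$, and the continuous Poincaré inequality bounds it by $\lVert \rho \rVert_V \leq c_P \lVert \mathrm{d}\rho \rVert = c_P \lVert \mathrm{d} v_h \rVert$.

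Next I would push $\rho$ into $V_h$ and compare $\pi_h \rho$ with $v_h$. Applying the commuting relation $\pi_h \mathrm{d} = \mathrm{d}\pi_h$ gives $\mathrm{d}(\pi_h \rho) = \pi_h(\mathrm{d}\rho) = \pi_h(\mathrm{d} v_h)$. The key point is that $\mathrm{d} v_h \in V_h$ and that $\pi_h$, being an idempotent surjection onto $V_h$, restricts to the identity on $V_h$; hence $\pi_h(\mathrm{d} v_h) = \mathrm{d} v_h$ and therefore $\mathrm{d}(\pi_h \rho) = \mathrm{d} v_h$. Since $\pi_h \rho$ and $v_h$ both lie in $V_h$, their difference satisfies $\mathrm{d}(\pi_h \rho - v_h) = 0$, so that $\pi_h\rho - v_h \in V_h \cap \mathcal{N}(\mathrm{d}) = \mathfrak{Z}_h$.

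The splitting $\pi_h\rho = (\pi_h\rho - v_h) + v_h$ thus expresses $\pi_h\rho$ as an element of $\mathfrak{Z}_h$ plus the element $v_h \in \mathfrak{Z}_h^\perp$; by uniqueness of the orthogonal decomposition $V_h = \mathfrak{Z}_h \oplus \mathfrak{Z}_h^\perp$, the component $v_h$ is precisely the orthogonal projection of $\pi_h\rho$ onto $\mathfrak{Z}_h^\perp$. Since this projection is nonexpansive in the $V$-norm, I would conclude
\begin{equation*}
  \lVert v_h \rVert_V \leq \lVert \pi_h \rho \rVert_V \leq \lVert \pi_h \rVert \, \lVert \rho \rVert_V \leq c_P \lVert \pi_h \rVert \, \lVert \mathrm{d} v_h \rVert,
\end{equation*}
which is the asserted bound. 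I do not expect a serious obstacle here: the argument is a routine combination of the continuous Poincaré inequality, the commuting property, and the orthogonality of the discrete Hodge decomposition. The one point that must be checked carefully is that $\pi_h$ genuinely fixes $\mathrm{d} v_h$---this relies on $\mathrm{d} V_h \subset V_h$ together with idempotency---and, relatedly, that the difference $\pi_h\rho - v_h$ lands in $\mathfrak{Z}_h$ rather than merely in $\mathcal{N}(\mathrm{d})$, which uses that both $\pi_h \rho$ and $v_h$ belong to $V_h$.
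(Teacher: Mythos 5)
Your argument is correct and essentially identical to the paper's: both solve $\mathrm{d}\rho = \mathrm{d} v_h$ for $\rho \in \mathfrak{Z}^\perp$, apply the continuous Poincar\'e inequality, use the commuting property to get $\mathrm{d}(\pi_h\rho - v_h) = 0$, and exploit orthogonality of $v_h$ to $\mathfrak{Z}_h$ to bound $\lVert v_h \rVert_V \leq \lVert \pi_h \rho \rVert_V$. The only cosmetic difference is that you phrase the last step as nonexpansiveness of the orthogonal projection onto $\mathfrak{Z}_h^\perp$, whereas the paper writes out the equivalent inner-product computation $\lVert v_h \rVert_V^2 = \langle v_h, \pi_h\rho\rangle_V$ and applies Cauchy--Schwarz.
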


\begin{proof}
  Given $ v _h \in \mathfrak{Z} _h ^\perp $, let $ z \in \mathfrak{Z}
  ^\perp $ be the element satisfying $ \mathrm{d} z = \mathrm{d} v _h
  $. This $z$ exists and is unique, since $ \mathrm{d} \rvert _{
    \mathfrak{Z} ^\perp } $ is a bijection between $ \mathfrak{Z}
  ^\perp $ and $ \mathfrak{B} \supset \mathfrak{B} _h \ni \mathrm{d} v
  _h $. Moreover, by \autoref{lem:poincare}, we have
  \begin{equation*}
    \lVert z \rVert _V \leq c _P \lVert \mathrm{d} z \rVert = c _P
    \lVert \mathrm{d} v _h \rVert .
  \end{equation*} 
  Thus, it suffices to show $ \lVert v _h \rVert _V \leq \lVert \pi _h
  \rVert \lVert z \rVert _V $.  Observe that
  \begin{equation*}
    \mathrm{d} v _h = \pi _h \mathrm{d} v _h = \pi _h \mathrm{d} z =
    \mathrm{d} \pi _h z ,
  \end{equation*} 
  which implies that $ \mathrm{d} ( v _h - \pi _h z ) = 0
  $. Hence, $ v _h - \pi _h z \in \mathfrak{Z} _h \perp v _h $, so
  \begin{equation*}
    \lVert v _h \rVert _V ^2 = 
    \langle v _h , v _h - \pi _h z \rangle _V + \langle v _h , \pi _h
    z \rangle _V = \langle v _h , \pi _h z \rangle _V \leq \lVert v _h
    \rVert _V \lVert \pi _h \rVert \lVert z \rVert _V .
  \end{equation*} 
  Finally, dividing through by $ \lVert v _h \rVert _V $ completes the proof. 
\end{proof}

Unlike with orthogonal projection, generally $ \pi _h v $ is not the
best approximation to $v$ in $ V _h $. However, it is nearly as good:
since $ \pi _h v _h = v _h $ for all $ v _h \in V _h $, we have
\begin{equation*}
  \lVert v - \pi _h v \rVert _V = \bigl\lVert ( I - \pi _h ) v
  \bigr\rVert _V = \bigl\lVert ( I - \pi _h ) ( v - v _h ) \bigr\rVert _V \leq C \lVert v - v _h \rVert _V ,
\end{equation*} 
and therefore
\begin{equation*}
  \lVert v - \pi _h v \rVert _V \leq C \inf _{ v _h \in V _h } \lVert
  v - v _h \rVert _V .
\end{equation*} 
In other words, the approximation error differs from the optimum by at
most a constant factor, a property known as \emph{quasi-optimality}.

\subsection{Stability and convergence of the discrete problem}
\label{sec:estimates}

Since $ \mathrm{d} _h $ is a closed-nilpotent operator on $ V _h $, we
may define a discrete Hodge--Dirac operator $ \mathrm{D} _h =
\mathrm{d} _h + \mathrm{d} _h ^\ast $. (Note that, since $ \mathrm{d}
_h ^\ast $ is generally not the restriction of $ \mathrm{d} ^\ast $ to
$ V _h $, neither is $ \mathrm{D} _h $ simply the restriction of $
\mathrm{D} $.)  The discrete version of the variational problem
\eqref{eqn:hodgeDirac} is then: Find $ ( u _h , p _h ) \in V _h \times
\mathfrak{H} _h $ satisfying
\begin{equation}
  \label{eqn:discreteHodgeDirac}
\begin{alignedat}{2}
  \langle \mathrm{d} u _h , v _h \rangle + \langle u _h , \mathrm{d} v
  _h \rangle + \langle p _h , v _h \rangle &= \langle f, v _h \rangle
  , \quad &\forall v _h &\in V _h ,\\
  \langle u _h , q _h \rangle &= 0 , \quad &\forall q _h &\in
  \mathfrak{H} _h .
\end{alignedat}
\end{equation}
Once again, since generally $ \mathfrak{H} _h \not\subset \mathfrak{H}
$, it follows that $ V _h \times \mathfrak{H} _h \not\subset V \times
\mathfrak{H} $. Hence, \eqref{eqn:discreteHodgeDirac} is not simply
a Galerkin discretization of the continuous variational problem
\eqref{eqn:hodgeDirac}; rather, there is a ``variational crime'' that
will need to be accounted for in the subsequent numerical analysis.

The following theorem gives a discrete inf-sup condition, thereby
showing the stability of the discretization and the well-posedness of
\eqref{eqn:discreteHodgeDirac}.

\begin{theorem}
  \label{eqn:discreteInfSup}
  Let $W$ be a Hilbert space with a closed-nilpotent operator $
  \mathrm{d} $ defined on the dense domain $ V \subset W $.  Suppose
  that $ V _h \subset V $ is a closed subspace satisfying $ \mathrm{d}
  V _h \subset V _h $ and equipped with a bounded projection $ \pi _h
  \colon V \rightarrow V _h $ such that $ \pi _h \mathrm{d} =
  \mathrm{d} \pi _h $.  Then there exists a constant $ \gamma _h > 0
  $, depending only on the Poincar\'e constant $ c _P $ and on the
  norm of $ \pi _h $, such that for all nonzero $ ( u _h , p _h ) \in
  V _h \times \mathfrak{H} _h $, there exists a nonzero $ ( v _h , q
  _h ) \in V _h \times \mathfrak{H} _h $ satisfying
  \begin{equation*}
    B ( u _h , p _h ; v _h , q _h ) \geq \gamma _h \bigl( \lVert u _h
    \rVert _V + \lVert p _h \rVert \bigr) \bigl( \lVert v _h \rVert _V + \lVert
    q _h \rVert \bigr)  .
  \end{equation*} 
\end{theorem}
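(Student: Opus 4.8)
The plan is to mirror the proof of \autoref{thm:infSup} verbatim, replacing every continuous object by its discrete counterpart and substituting the discrete Poincar\'e inequality of \autoref{lem:discretePoincare} for the continuous one. Given nonzero $(u_h, p_h) \in V_h \times \mathfrak{H}_h$, I would first decompose $u_h$ according to the discrete Hodge decomposition $V_h = \mathfrak{B}_h \oplus \mathfrak{H}_h \oplus \mathfrak{Z}_h^\perp$, writing $u_h = u_{h,\mathfrak{B}} + u_{h,\mathfrak{H}} + u_{h,\perp}$. I would then take the test functions
\[
v_h = \rho_h + p_h + \mathrm{d} u_h, \qquad q_h = u_{h,\mathfrak{H}},
\]
where $\rho_h \in \mathfrak{Z}_h^\perp$ is the unique element with $\mathrm{d}\rho_h = u_{h,\mathfrak{B}}$, which exists because $\mathrm{d}_h$ restricts to a bijection from $\mathfrak{Z}_h^\perp$ onto $\mathfrak{B}_h$. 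Crucially, all three summands of $v_h$ lie in $V_h$---in particular $\mathrm{d}u_h \in V_h$ because $\mathrm{d}V_h \subset V_h$---so $(v_h, q_h) \in V_h \times \mathfrak{H}_h$ is an admissible test pair.

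Next I would bound the test pair from above, exactly as in the continuous case but invoking \autoref{lem:discretePoincare} to estimate $\lVert \rho_h \rVert_V \leq c_P \lVert \pi_h \rVert \lVert u_{h,\mathfrak{B}} \rVert$, giving
\[
\lVert v_h \rVert_V + \lVert q_h \rVert \leq \lVert \rho_h \rVert_V + \lVert p_h \rVert + \lVert \mathrm{d}u_h \rVert + \lVert u_{h,\mathfrak{H}} \rVert \leq C \bigl( \lVert u_h \rVert_V + \lVert p_h \rVert \bigr),
\]
with $C$ depending only on $c_P$ and $\lVert \pi_h \rVert$. I would then substitute the test pair into $B$ and expand, using that $\mathrm{d}v_h = u_{h,\mathfrak{B}}$ (since $\mathrm{d}p_h = 0$ and $\mathrm{d}\mathrm{d}u_h = 0$) together with the pairwise orthogonality of the three components, to obtain
\[
B(u_h, p_h; v_h, q_h) = \lVert \mathrm{d}u_h \rVert^2 + \lVert u_{h,\mathfrak{B}} \rVert^2 + \lVert u_{h,\mathfrak{H}} \rVert^2 + \lVert p_h \rVert^2.
\]
Finally, splitting $\lVert \mathrm{d}u_h \rVert^2$ in half, writing $\mathrm{d}u_h = \mathrm{d}u_{h,\perp}$, and applying \autoref{lem:discretePoincare} once more to bound $\lVert u_{h,\perp} \rVert \leq c_P \lVert \pi_h \rVert \lVert \mathrm{d}u_h \rVert$, I would reach
\[
B(u_h, p_h; v_h, q_h) \geq \frac{1}{2 c_P^2 \lVert \pi_h \rVert^2} \bigl( \lVert u_h \rVert_V^2 + \lVert p_h \rVert^2 \bigr),
\]
and combine the two displays to produce the claimed estimate with $\gamma_h$ depending only on $c_P$ and $\lVert \pi_h \rVert$.

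The one point requiring genuine care---the manifestation of the ``variational crime''---is that the inner products appearing in $B$ are $W$-inner products, whereas the discrete Hodge decomposition of $V_h$ is a priori orthogonal only in the $V$-inner product. The expansion of $B$ above relies on the summands of $u_h$ and $v_h$ being mutually $W$-orthogonal, so I would verify this explicitly. The verification is short: $\mathfrak{B}_h$ and $\mathfrak{H}_h$ both sit inside $\mathfrak{Z}_h = \mathcal{N}(\mathrm{d}_h)$, on which the $V$- and $W$-inner products coincide, and any $\rho_h \in \mathfrak{Z}_h^\perp$ is in fact $W$-orthogonal to $\mathfrak{Z}_h$, since $\langle \rho_h, z_h \rangle = \langle \rho_h, z_h \rangle_V = 0$ for every $z_h \in \mathfrak{Z}_h$. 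Thus the decomposition is orthogonal in both inner products and the cross terms vanish exactly as they did in \autoref{thm:infSup}; apart from this bookkeeping, the only substantive difference is the extra factor $\lVert \pi_h \rVert$ inherited from the discrete Poincar\'e constant.
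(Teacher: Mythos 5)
Your proposal is correct and is essentially the paper's argument: the paper's proof is the one-line observation that the result ``follows immediately from \autoref{thm:infSup} and \autoref{lem:discretePoincare},'' i.e., apply the abstract inf-sup theorem to the closed-nilpotent operator $\mathrm{d}_h$ on $V_h$, whose Poincar\'e constant is bounded by $c_P\lVert\pi_h\rVert$. You have simply unrolled that application explicitly, and your check that the discrete Hodge decomposition is $W$-orthogonal (not just $V$-orthogonal) is a correct and worthwhile piece of bookkeeping that the paper leaves implicit.
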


\begin{proof}
  This follows immediately from \autoref{thm:infSup} and
  \autoref{lem:discretePoincare}.
\end{proof}

\begin{corollary}
  \label{cor:discreteHodgeDiracWellPosed}
  The discrete variational problem for the Hodge--Dirac operator is
  well-posed.  That is, there exists a constant $ c _h $,
  depending only on the Poincar\'e constant $ c _P $ and on the norm
  of $ \pi _h $, such that for all $ f \in W $, the problem
  \eqref{eqn:discreteHodgeDirac} has a unique solution $ ( u _h , p _h
  ) \in V _h \times \mathfrak{H} _h $, which satisfies
  \begin{equation*}
    \lVert u _h \rVert _V + \lVert p _h \rVert \leq c _h \lVert f \rVert .
  \end{equation*} 
\end{corollary}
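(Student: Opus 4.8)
The plan is to obtain well-posedness as a direct corollary of the discrete inf-sup condition, mirroring the proof of \autoref{cor:hodgeDiracWellPosed} but invoking \autoref{eqn:discreteInfSup} in place of \autoref{thm:infSup}. First I would record that $V _h \times \mathfrak{H} _h$, normed by $\lVert ( u _h , p _h ) \rVert = \lVert u _h \rVert _V + \lVert p _h \rVert$, is complete: $V _h$ is closed in $V$ by hypothesis and $\mathfrak{H} _h$ is a closed subspace of $V _h$, so the product is a Banach space. On this space $B$ restricts to a bounded bilinear form---the Cauchy--Schwarz estimate that bounds $B$ on $V \times \mathfrak{H}$ applies unchanged---and the right-hand side $\ell ( v _h , q _h ) = \langle f, v _h \rangle$ defines a bounded linear functional with $\lVert \ell \rVert \leq \lVert f \rVert$, since $\lvert \langle f, v _h \rangle \rvert \leq \lVert f \rVert \lVert v _h \rVert \leq \lVert f \rVert \lVert ( v _h , q _h ) \rVert$, using $\lVert v _h \rVert \leq \lVert v _h \rVert _V$.

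I would then apply the generalized Lax--Milgram theorem of \citet{Babuska1971}, checking its two hypotheses. The trial-space inf-sup bound is furnished verbatim by \autoref{eqn:discreteInfSup}, with constant $\gamma _h$ and with respect to exactly the norm $\lVert u _h \rVert _V + \lVert p _h \rVert$ appearing in that statement. For the complementary test-space condition---that for every nonzero $( v _h , q _h )$ there is some $( u _h , p _h )$ with $B ( u _h , p _h ; v _h , q _h ) \neq 0$---I would appeal to the symmetry $B ( u _h , p _h ; v _h , q _h ) = B ( v _h , q _h ; u _h , p _h )$, which reduces it to the inf-sup bound already in hand. The theorem then yields a unique $( u _h , p _h ) \in V _h \times \mathfrak{H} _h$ solving \eqref{eqn:discreteHodgeDirac}, together with the estimate $\lVert u _h \rVert _V + \lVert p _h \rVert \leq \gamma _h ^{-1} \lVert \ell \rVert \leq \gamma _h ^{-1} \lVert f \rVert$; thus the claim holds with $c _h = \gamma _h ^{-1}$. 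Since $\gamma _h$ depends only on $c _P$ and $\lVert \pi _h \rVert$ by \autoref{eqn:discreteInfSup}, so does $c _h$.

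I do not expect a genuine obstacle: the entire analytic content is already packaged in the discrete inf-sup condition, and this corollary is a routine invocation of the abstract well-posedness theory, exactly as in the continuous case. The only points meriting a moment's care are the completeness of $V _h \times \mathfrak{H} _h$ (resting on $\mathfrak{H} _h$ being closed) and the observation that the symmetry of $B$ supplies the second Babu\v{s}ka hypothesis at no extra cost; both are immediate. Notably, no variational-crime bookkeeping enters at this stage, because \eqref{eqn:discreteHodgeDirac} is posed and solved entirely within the discrete space $V _h \times \mathfrak{H} _h$---the discrepancy between $\mathfrak{H} _h$ and $\mathfrak{H}$ will only become relevant later, in the error analysis comparing $( u _h , p _h )$ to the continuous solution.
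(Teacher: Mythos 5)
Your proposal is correct and matches the paper's (implicit) argument: the paper gives no written proof for this corollary, but, exactly as in the continuous case (\autoref{cor:hodgeDiracWellPosed}), it is intended to follow directly from the discrete inf-sup condition of \autoref{eqn:discreteInfSup} together with the symmetry and boundedness of $B$, via the Babu\v{s}ka theory. Your additional remarks on the completeness of $V_h \times \mathfrak{H}_h$ and the irrelevance of the variational crime at this stage are accurate.
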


The next theorem provides a quasi-optimal \textit{a priori} error
estimate for the approximation of solutions to \eqref{eqn:hodgeDirac}
by those to \eqref{eqn:discreteHodgeDirac}. (Compare \citet[Theorem
3.9]{ArFaWi2010}.)

\begin{notation}
  In the statement of the theorem, $ P _{ \mathfrak{B} } $ and $ P _{
    \mathfrak{H} } $ denote the $W$-orthogonal projections onto $
  \mathfrak{B} $ and $ \mathfrak{H} $, respectively. We will use
  similar notation, throughout the remainder of the paper, to denote
  $W$-orthogonal projection onto these and other closed subspaces.
\end{notation}

\begin{theorem}
  \label{thm:errorEstimate}
  Let $W$ be a Hilbert space with a closed-nilpotent operator $
  \mathrm{d} $ defined on the dense domain $ V \subset W $.  Suppose
  that $ V _h \subset V $ is a family of closed subspaces,
  parametrized by $ h $, satisfying $ \mathrm{d} V _h \subset V _h $.
  Suppose also that these subspaces are equipped with projections $
  \pi _h \colon V \rightarrow V _h $, bounded uniformly in $h$, such
  that $ \pi _h \mathrm{d} = \mathrm{d} \pi _h $.  If $ ( u, p ) \in V
  \times \mathfrak{H} $ solves \eqref{eqn:hodgeDirac} and $ ( u _h , p
  _h ) \in V _h \times \mathfrak{H} _h $ solves
  \eqref{eqn:discreteHodgeDirac}, then we have the error estimate
  \begin{equation*}
    \lVert u - u _h \rVert _V + \lVert p - p _h \rVert 
    \leq C \biggl( \inf _{ v \in V _h } \lVert u - v \rVert _V + \inf
    _{ q \in V _h } \lVert p - q \rVert _V + \mu \inf _{ v \in V _h }
    \lVert P _{ \mathfrak{B} } u - v \rVert _V \biggr) ,
  \end{equation*}
  where $ \mu = \bigl\lVert ( I - \pi _h ) P _{ \mathfrak{H} }
  \bigr\rVert $.
\end{theorem}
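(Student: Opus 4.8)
My plan is to run a Strang-type argument built on the discrete inf-sup condition of \autoref{eqn:discreteInfSup}, isolating the ``variational crime'' into a single consistency functional. For any admissible $( w _h , r _h ) \in V _h \times \mathfrak{H} _h$, the discrete inf-sup condition supplies a test pair $( v _h , q _h )$ realizing stability for the discrete error $( u _h - w _h , p _h - r _h )$. The key algebraic identity, obtained by inserting the discrete equation \eqref{eqn:discreteHodgeDirac} and the first continuous equation \eqref{eqn:hodgeDirac} tested against $v _h \in V _h \subset V$, is
\begin{equation*}
  B ( u _h - w _h , p _h - r _h ; v _h , q _h ) = B ( u - w _h , p - r _h ; v _h , q _h ) - \langle u, q _h \rangle ,
\end{equation*}
where the defect $\langle u, q _h \rangle$ is exactly what would vanish were $q _h \in \mathfrak{H}$. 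Combining this with the boundedness of $B$, dividing by $\gamma _h$ (which depends only on $c _P$ and $\lVert \pi _h \rVert$), and applying the triangle inequality reduces the error bound, after judicious choices of $w _h$ and $r _h$, to three tasks: controlling (i) $\lVert u - w _h \rVert _V$, (ii) $\lVert p - r _h \rVert$, and (iii) the consistency quantity $\sup _{ 0 \neq q _h \in \mathfrak{H} _h } \lvert \langle u, q _h \rangle \rvert / \lVert q _h \rVert$.

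For (i) I take $w _h = \pi _h u$ and invoke the quasi-optimality of the bounded commuting projection established after \autoref{lem:discretePoincare}, which yields the first term $C \inf _{ v \in V _h } \lVert u - v \rVert _V$. For (ii) I exploit that $p \in \mathfrak{H} \subset \mathfrak{Z}$, so $\mathrm{d} p = 0$ and the commuting property gives $\mathrm{d} \pi _h p = \pi _h \mathrm{d} p = 0$, i.e.\ $\pi _h p \in \mathfrak{Z} _h = \mathfrak{B} _h \oplus \mathfrak{H} _h$. I then set $r _h = P _{ \mathfrak{H} _h } \pi _h p \in \mathfrak{H} _h$, so that $\pi _h p - r _h = P _{ \mathfrak{B} _h } \pi _h p \in \mathfrak{B} _h$. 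Since $p \perp _W \mathfrak{B} \supset \mathfrak{B} _h$, a one-line Cauchy--Schwarz argument gives $\lVert P _{ \mathfrak{B} _h } \pi _h p \rVert \leq \lVert p - \pi _h p \rVert$, whence $\lVert p - r _h \rVert \leq 2 \lVert p - \pi _h p \rVert _V \leq C \inf _{ q \in V _h } \lVert p - q \rVert _V$ by quasi-optimality, producing the second term (notably with no $\mu$ factor).

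The heart of the proof---and the main obstacle---is (iii), which must be shown to contribute the \emph{refined} quantity $\mu \inf _{ v \in V _h } \lVert P _{ \mathfrak{B} } u - v \rVert _V$ rather than the crude $\mu \lVert P _{ \mathfrak{B} } u \rVert$. The second equation of \eqref{eqn:hodgeDirac} forces $P _{ \mathfrak{H} } u = 0$, and since $q _h \in \mathfrak{Z} _h \subset \mathfrak{Z}$ annihilates $u _\perp$, I reduce to $\langle u, q _h \rangle = \langle P _{ \mathfrak{B} } u, ( q _h ) _{ \mathfrak{B} } \rangle$, where $( q _h ) _{ \mathfrak{B} } = ( I - P _{ \mathfrak{H} } ) q _h \in \mathfrak{B}$. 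Two facts about $( q _h ) _{ \mathfrak{B} }$ drive the estimate. First, because $q _h \in \mathfrak{H} _h \perp _W \mathfrak{B} _h$ while $P _{ \mathfrak{H} } q _h \perp _W \mathfrak{B}$, the element $( q _h ) _{ \mathfrak{B} }$ is $W$-orthogonal to $\mathfrak{B} _h$, so any $b _h \in \mathfrak{B} _h$ may be subtracted freely: $\langle u, q _h \rangle = \langle P _{ \mathfrak{B} } u - b _h , ( q _h ) _{ \mathfrak{B} } \rangle$. Second, since $\pi _h q _h = q _h$, the identity $( I - \pi _h ) ( q _h ) _{ \mathfrak{B} } = - ( I - \pi _h ) P _{ \mathfrak{H} } q _h$ holds; combined with the orthogonality of $( q _h ) _{ \mathfrak{B} }$ to $\pi _h ( q _h ) _{ \mathfrak{B} } \in \mathfrak{B} _h$, this yields $\lVert ( q _h ) _{ \mathfrak{B} } \rVert \leq \lVert ( I - \pi _h ) P _{ \mathfrak{H} } q _h \rVert \leq \mu \lVert q _h \rVert$. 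Finally, choosing the specific $b _h = \pi _h P _{ \mathfrak{B} } u$---which lies in $\mathfrak{B} _h$ precisely because $\pi _h$ commutes with $\mathrm{d}$ and $P _{ \mathfrak{B} } u \in \mathfrak{B} = \mathcal{R} ( \mathrm{d} )$---and applying quasi-optimality once more lets me replace the $\mathfrak{B} _h$-approximation by the $V _h$-approximation, giving $\lvert \langle u, q _h \rangle \rvert \leq C \mu \, \lVert q _h \rVert \inf _{ v \in V _h } \lVert P _{ \mathfrak{B} } u - v \rVert _V$. Assembling (i)--(iii) yields the claimed estimate. I expect the delicate points to be verifying the two orthogonality relations for $( q _h ) _{ \mathfrak{B} }$ and ensuring that $\mu$ is genuinely paired with a best-approximation error---the feature that renders this term higher-order and recovers the sharp estimate of \citet[Theorem 3.9]{ArFaWi2010} as in \autoref{sec:laplace}.
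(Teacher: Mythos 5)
Your proposal is correct and follows essentially the same route as the paper's proof: the same error identity $B(u-u_h,p-p_h;v_h,q_h)=\langle u,q_h\rangle$, the discrete inf-sup condition, the observation $\pi_h p\in\mathfrak{Z}_h$ to control the harmonic error, and the same mechanism for the $\mu$-term (the $\mathfrak{B}$-component of an element of $\mathfrak{Z}_h$ has norm at most $\mu$ times that element, because it is orthogonal to its own $\pi_h$-image in $\mathfrak{B}_h$). The only cosmetic differences are that you bound the consistency functional $q_h\mapsto\langle u,q_h\rangle$ for a general $q_h\in\mathfrak{H}_h$ where the paper equivalently computes $\lVert P_{\mathfrak{H}_h}u\rVert$ by specializing to $q_h=P_{\mathfrak{H}_h}u$, and that your comparison elements $w_h=\pi_h u$, $r_h=P_{\mathfrak{H}_h}\pi_h p$ replace the paper's orthogonal projections, which changes nothing in the final estimate.
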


\begin{proof}
  First, observe from the variational principles
  \eqref{eqn:hodgeDirac} and \eqref{eqn:discreteHodgeDirac} that
  \begin{equation*}
    B ( u , p; v _h , q _h ) = \langle f, v _h \rangle + \langle u , q
    _h \rangle = B ( u _h , p _h ; v _h , q _h ) + \langle u , q _h
    \rangle .
  \end{equation*} 
  Now, let $v$ and $q$ be the $V$-orthogonal projections of $u$ and
  $p$ onto $ V _h $ and $ \mathfrak{H} _h $, respectively. Then, using
  the previous observation and the boundedness of the bilinear form
  $B$, we have
  \begin{multline*}
    B ( u _h - v , p _h - q ; v _h , q _h ) = B ( u - v , p - q ; v _h
    , q _h ) - \langle u , q _h  \rangle \\
    \leq C \bigl( \lVert u - v \rVert _V + \lVert p - q \rVert +
    \lVert P _{ \mathfrak{H} _h } u \rVert \bigr) \bigl( \lVert v _h
    \rVert _V + \lVert q _h \rVert \bigr) .
  \end{multline*} 
  Therefore, applying the discrete inf-sup condition yields
  \begin{equation*} 
    \lVert u _h - v \rVert _V + \lVert p _h - q \rVert \leq C \bigl(
    \lVert u - v \rVert _V + \lVert p - q \rVert + \lVert P _{
      \mathfrak{H} _h } u \rVert \bigr).
  \end{equation*} 
  It now remains to estimate the terms $ \lVert p - q \rVert $ and $
  \lVert P _{ \mathfrak{H} _h } u \rVert $.

  For the former term, note that $ p \in \mathfrak{H} \perp
  \mathfrak{B} \supset \mathfrak{B} _h $, so 
  \begin{equation*}
    P _{ \mathfrak{Z} _h } p = P _{ \mathfrak{H} _h } p + P _{
      \mathfrak{B} _h } p = q + 0 = q .
  \end{equation*} 
  On the other hand, since $ p \in \mathfrak{H} \subset \mathfrak{Z}
  $, it follows that $ \pi _h p \in \mathfrak{Z} _h $. Hence,
  \begin{equation}
    \label{eqn:p-q}
    \lVert p - q \rVert = \bigl\lVert ( I - P _{ \mathfrak{Z}  _h } )
    p \bigr\rVert \leq \bigl\lVert ( I - \pi _h ) p \bigr\rVert \leq C
    \inf _{ q \in V _h } \lVert p - q \rVert _V ,
  \end{equation} 
  where we have used the optimality property of $ P _{ \mathfrak{Z} _h
  } $ and the quasi-optimality property of $ \pi _h $. (Compare
  \citet[Theorem 3.5]{ArFaWi2010}.)

  For the latter term, we have $ u \perp \mathfrak{H} $, so its Hodge
  decomposition can be written $ u = u _{ \mathfrak{B} } + u _{ \perp
  } $. However, $ u _{ \perp } \perp \mathfrak{Z} \supset \mathfrak{H}
  _h $, so $ P _{ \mathfrak{H} _h } u = P _{ \mathfrak{H} _h } u _{
    \mathfrak{B} } $. Furthermore, since $ \pi _h u _{ \mathfrak{B} }
  \in \mathfrak{B} _h \perp \mathfrak{H} _h $, we have
  \begin{equation*}
    P _{ \mathfrak{H}  _h } u = P _{ \mathfrak{H}  _h } u _{
      \mathfrak{B}  } = P _{ \mathfrak{H}  _h } ( u _{ \mathfrak{B}  }
    - \pi _h u _{ \mathfrak{B}  } ) = P _{ \mathfrak{H}  _h } ( I -
    \pi _h ) u _{ \mathfrak{B}  } .
  \end{equation*} 
  This implies $ \lVert P _{ \mathfrak{H} _h } u \rVert ^2 =
  \bigl\langle ( I - \pi _h ) u _{ \mathfrak{B} } , P _{ \mathfrak{H}
    _h } u \bigr\rangle $, and since $ ( I - \pi _h ) u _{
    \mathfrak{B} } \in \mathfrak{B} \perp \mathfrak{H} $,
  \begin{equation*}
    \lVert P _{ \mathfrak{H}  _h } u \rVert ^2 = \bigl\langle ( I -
    \pi _h ) u _{ \mathfrak{B} } , ( I - P _{ \mathfrak{H}  } ) P _{
      \mathfrak{H}  _h } u \bigr\rangle \leq \bigl\lVert ( I - \pi _h
    ) u _{ \mathfrak{B}  } \bigr\rVert \bigl\lVert ( I - P _{
      \mathfrak{H}  } ) P _{ \mathfrak{H}  _h } u \bigr\rVert .
  \end{equation*} 
  Next, since $ P _{ \mathfrak{H} _h } u \in \mathfrak{H} _h \subset
  \mathfrak{Z} $, we have $ ( I - P _{ \mathfrak{H} } ) P _{
    \mathfrak{H} _h } u \in \mathfrak{B} $. This implies that $ \pi _h
  ( I - P _{ \mathfrak{H} }) P _{ \mathfrak{H} _h } u \in \mathfrak{B}
  _h $ is orthogonal to both $ P _{ \mathfrak{H} _h } u $ and $ P _{
    \mathfrak{H} } P _{ \mathfrak{H} _h } u $, and hence to $ ( I - P
  _{ \mathfrak{H} } ) P _{ \mathfrak{H} _h } u $, so by the
  Pythagorean theorem,
  \begin{align*}
    \bigl\lVert ( I - P _{ \mathfrak{H} } ) P _{ \mathfrak{H} _h } u
    \bigr\rVert &\leq \bigl\lVert ( I - P _{ \mathfrak{H} } ) P _{
      \mathfrak{H} _h } u - \pi _h ( I - P _{
      \mathfrak{H}  } ) P _{ \mathfrak{H}  _h } u \bigr\rVert \\
    &= \bigl\lVert ( I - \pi _h ) P _{ \mathfrak{H} } P _{
      \mathfrak{H} _h } u \bigr\rVert \\
    &\leq \mu \lVert P _{ \mathfrak{H}  _h } u \rVert .
  \end{align*} 
  Finally, combining this with the estimate above for $ \lVert P _{
    \mathfrak{H} _h } u \rVert ^2 $, and using the quasi-optimality
  property of $ \pi _h $, we have
  \begin{equation*}
    \lVert P _{ \mathfrak{H}  _h } u \rVert \leq \mu \bigl\lVert ( I -
    \pi _h ) u _{\mathfrak{B}}  \bigr\rVert \leq C \mu \inf _{ v \in V
      _h } \lVert u _{ \mathfrak{B}  } - v \rVert _V .
  \end{equation*} 
  Altogether, it has now been shown that 
  \begin{equation*} 
    \lVert u _h - v \rVert _V + \lVert p _h - q \rVert \leq C \biggl(
    \inf _{ v \in V _h } \lVert u - v \rVert _V + \inf _{ q \in
      V  _h } \lVert p - q \rVert _V + \mu \inf _{ v \in V _h }
    \lVert P _{ \mathfrak{B}  }  u - v \rVert _V \biggr),
  \end{equation*} 
  so the result follows by an application of the triangle inequality.
\end{proof}

Finally, note that if the family of subspaces
$ \{ V _h \} _{ h > 0 } $ is \emph{pointwise approximating} in $V$, in
the sense that
\begin{equation}
\label{eqn:approx}
 \inf _{ v _h \in V _h } \lVert v - v _h \rVert _V \rightarrow 0
 \text{  as } h \rightarrow 0 \text{ for all }  v \in V ,
\end{equation}
then \autoref{thm:errorEstimate} immediately implies that
$ ( u _h , p _h ) \rightarrow ( u, p ) $ in $ V \times V $.

\subsection{Improved error estimates for diffuse Fredholm operators}
\label{sec:improved}

In this section, we obtain improved error estimates under the stronger
assumption that $ \mathrm{d} $ is not only closed-nilpotent, but is
diffuse Fredholm-nilpotent, cf.~\autoref{def:nilpotent}. The approach
is related to other improved estimates obtained using duality
techniques; these are known by various names, such as the
``Aubin--Nitsche trick'' and ``$ L ^2 $ lifting'' (cf. \citet[Theorem
3.2.4]{Ciarlet1978}).  The proofs given here owe a considerable debt
to \citet[Section~3.5]{ArFaWi2010}, whose techniques for the
Hodge--Laplace problem we have adapted to the Hodge--Dirac problem,
with some modifications.

Assume now that $ \mathrm{d} $ is a diffuse Fredholm-nilpotent
operator. It follows that $ \mathrm{D} $ is diffuse Fredholm, so the
solution operator $K$ on $W$, which takes $ f \mapsto u $, is
compact. Moreover, $ P _{ \mathfrak{H} } $ is also compact, since the
Fredholm property implies $ \dim \mathfrak{H} < \infty $. Finally, we
add the assumption that $ \pi _h $ is a bounded operator on $W$,
whereas previously, we had assumed only that it was bounded on
$V$.

Following \citet{ArFaWi2010}, we denote
$ \eta = \bigl\lVert ( I - \pi _h ) K \bigr\rVert $ and, as before,
$ \mu = \bigl\lVert ( I - \pi _h ) P _{\mathfrak{H}} \bigr\rVert $.
If the family of subspaces $ \{ V _h \} _{ h > 0 } $ is pointwise
approximating in $V$, then it follows that $ \{ W _h \} _{ h > 0 } $
is pointwise approximating in $W$, since
\begin{equation*}
  \inf _{ w _h \in W _h } \lVert w - w _h \rVert \leq \inf _{ v \in
    V } \bigl( \lVert w - v \rVert + \inf _{ v _h \in V _h } \lVert v
  - v _h \rVert _V \bigr) \rightarrow 0 ,
\end{equation*} 
by density of $V$ in $W$ together with the pointwise approximating
condition in $V$. Moreover, if the operators $ \pi _h $ are uniformly
bounded in $h$, then quasi-optimality implies that
$ I - \pi _h \rightarrow 0 $ pointwise in $W$.  Since $K$ and
$ P _{ \mathfrak{H} } $ are compact operators, they convert pointwise
convergence to norm convergence, and therefore
$ \eta, \mu \rightarrow 0 $. In the typical case of the de~Rham
complex, when $ V _h $ consists of piecewise polynomial differential
forms up to degree $r$, we will have $ \eta = O (h) $ and $\mu = O (h
^{ r + 1 } )$ (\citet[p.~312]{ArFaWi2010}).

In \autoref{thm:errorEstimate}, recall that we bounded the quantity $
\lVert u - u _h \rVert _V + \lVert p - p _h \rVert $. Refined
estimates will now be obtained by breaking this up into several
components,
\begin{equation*}
  \bigl\lVert  \mathrm{d} ( u - u _h ) \bigr\rVert , \qquad   \lVert P
  _{ \mathfrak{B}  } u - P _{ \mathfrak{B}  _h } u _h \rVert
  , \qquad \lVert P _{ \mathfrak{B}  ^\ast } u - P _{ \mathfrak{B}  _h
    ^\ast } u _h \rVert , \qquad \lVert p - p _h \rVert ,
\end{equation*} 
and estimating each of these individually, in a sequence of
theorems. Equivalently, these results can be interpreted as giving
error estimates for the individual terms of the discrete Hodge
decomposition. Before doing so, we begin with a lemma that will be
useful throughout this section.

\begin{lemma}
  \label{lem:improvedEstLemma}
  If $ v _h \in \mathfrak{Z}  _h ^\perp $ and $ v = P _{ \mathfrak{B}
    ^\ast } v _h $, then
  \begin{equation*}
    \lVert v - v _h \rVert \leq \bigl\lVert ( I - \pi _h ) v
    \bigr\rVert \leq \eta \lVert \mathrm{d} v _h \rVert .
  \end{equation*} 
\end{lemma}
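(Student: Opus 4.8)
The plan is to prove the two inequalities in turn, after recording a pair of structural observations about how $v$ and $v_h$ are related. Since $v = P_{\mathfrak{B}^\ast} v_h$ and the two discarded components $P_{\mathfrak{B}} v_h$ and $P_{\mathfrak{H}} v_h$ both lie in $\mathfrak{Z} = \mathcal{N}(\mathrm{d})$, the difference satisfies $v - v_h = -(P_{\mathfrak{B}} v_h + P_{\mathfrak{H}} v_h) \in \mathfrak{Z}$; in particular $\mathrm{d} v = \mathrm{d} v_h$. Feeding this through the commuting projection and using $\pi_h v_h = v_h$ gives $\mathrm{d} \pi_h v = \pi_h \mathrm{d} v = \pi_h \mathrm{d} v_h = \mathrm{d} \pi_h v_h = \mathrm{d} v_h$, so $\mathrm{d}(\pi_h v - v_h) = 0$ and hence $\pi_h v - v_h \in \mathfrak{Z}_h$.

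For the first inequality I would decompose $(I - \pi_h) v = (v - v_h) - (\pi_h v - v_h)$ and argue that the two summands on the right are $W$-orthogonal, so that the Pythagorean theorem yields $\lVert (I - \pi_h) v \rVert^2 = \lVert v - v_h \rVert^2 + \lVert \pi_h v - v_h \rVert^2 \geq \lVert v - v_h \rVert^2$. The orthogonality splits into two pieces: first, $v \in \mathfrak{B}^\ast$ is $W$-orthogonal to $\pi_h v - v_h \in \mathfrak{Z}_h \subset \mathfrak{Z}$ because $\mathfrak{B}^\ast \perp_W \mathfrak{Z}$; second, $\langle v_h, \pi_h v - v_h \rangle = 0$, which I would obtain by noting that $\mathrm{d}(\pi_h v - v_h) = 0$ collapses the graph inner product onto the $W$-inner product, so that this $W$-pairing equals $\langle v_h, \pi_h v - v_h \rangle_V$, which vanishes since $v_h \in \mathfrak{Z}_h^\perp$ while $\pi_h v - v_h \in \mathfrak{Z}_h$.

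For the second inequality the idea is to realize $v$ as the image of $\mathrm{d} v_h$ under the solution operator $K$, whence $\lVert (I - \pi_h) v \rVert = \lVert (I - \pi_h) K \mathrm{d} v_h \rVert \leq \eta \lVert \mathrm{d} v_h \rVert$. To check that $v = K(\mathrm{d} v_h)$, I would verify the conditions defining $K$: that $v \in \mathcal{D}(\mathrm{D})$, using $v \in \mathcal{D}(\mathrm{d})$ (the discarded components lie in $\mathfrak{Z} \subset V$) together with $v \in \mathfrak{B}^\ast \subset \mathfrak{Z}^\ast \subset \mathcal{D}(\mathrm{d}^\ast)$; that $\mathrm{d}^\ast v = 0$, so that $\mathrm{D} v = \mathrm{d} v = \mathrm{d} v_h$; and that $v \perp \mathfrak{H}$, again from $\mathfrak{B}^\ast \perp_W \mathfrak{H}$. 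Since $\mathrm{d} v_h \in \mathfrak{B}$ has vanishing harmonic part, the pair $(v, 0)$ solves $\mathrm{D} u + p = \mathrm{d} v_h$ with $u \perp \mathfrak{H}$ and $p \in \mathfrak{H}$, and uniqueness of the solution identifies $K(\mathrm{d} v_h) = v$.

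I expect the main obstacle to be keeping the Hodge-decomposition bookkeeping precise---especially the point that $\mathrm{d}(\pi_h v - v_h) = 0$ is exactly what forces the $V$- and $W$-inner products to agree on the relevant pairing, thereby converting the discrete $V$-orthogonality $v_h \perp_V \mathfrak{Z}_h$ into the $W$-orthogonality needed for the Pythagorean step. The remaining verifications, namely that $v$ meets every requirement in the definition of $K(\mathrm{d} v_h)$, are routine but must be matched carefully against that definition.
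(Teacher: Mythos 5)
Your proof is correct and follows essentially the same route as the paper's: a Pythagorean argument based on $\pi_h v - v_h \in \mathfrak{Z}_h$ together with $v - v_h \perp_W \mathfrak{Z}_h$, followed by the identification $v = K\,\mathrm{d} v_h$ and the definition of $\eta$. You merely fill in details the paper leaves implicit, namely the $W$-versus-$V$ orthogonality bookkeeping and the verification that $P_{\mathfrak{B}^\ast} = K \mathrm{d}$.
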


\begin{proof}
  Since $ \pi _h v - v _h = \pi _h ( v - v _h ) \in \mathfrak{Z} _h
  \subset \mathfrak{Z} $, we have $ ( v _h - \pi _h v ) \perp ( v _h -
  v ) $. Therefore, the Pythagorean theorem implies
  \begin{equation*}
    \lVert v - v _h \rVert \leq \lVert v - \pi _h v \rVert =
    \bigl\lVert ( I - \pi _h ) v \bigr\rVert .
  \end{equation*} 
  Finally, observing that $ P _{ \mathfrak{B} ^\ast } = K \mathrm{d}
  $, it follows that $ v = K \mathrm{d} v _h $, and thus $ \bigl\lVert
  ( I - \pi _h ) v \bigr\rVert = \bigl\lVert ( I - \pi _h ) K
  \mathrm{d} v _h \bigr\rVert \leq \eta \lVert \mathrm{d} v _h \rVert
  $.
\end{proof}

For the following estimates, let $ P _h $ denote the $W$-orthogonal
projection onto $ V _h $, and for $ w \in W $, define
\begin{equation*}
  E (w) = \bigl\lVert ( I - P _h ) w \bigr\rVert = \inf _{ v _h \in V
    _h } \lVert w - v _h  \rVert ,
\end{equation*} 
i.e., the best approximation to $ w \in W $ by an element of $ V _h $.

\begin{theorem}
  \label{thm:duhEstimate}
  $ \bigl\lVert \mathrm{d} ( u - u _h ) \bigr\rVert \leq C E
  (\mathrm{d} u ) $.
\end{theorem}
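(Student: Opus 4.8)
The plan is to identify $\mathrm{d} u_h$ as the $W$-orthogonal projection of $\mathrm{d} u$ onto the discrete boundary space $\mathfrak{B}_h$, and then to bound the resulting projection error by the best-approximation quantity $E(\mathrm{d} u)$ using the commuting projection $\pi_h$. First I would subtract the discrete problem \eqref{eqn:discreteHodgeDirac} from the continuous problem \eqref{eqn:hodgeDirac}, restricted to test functions in $V_h \subset V$. Since the right-hand sides agree for $v_h \in V_h$, this yields the error equation
\begin{equation*}
  \langle \mathrm{d}(u - u_h), v_h\rangle + \langle u - u_h, \mathrm{d} v_h\rangle + \langle p - p_h, v_h\rangle = 0, \qquad \forall v_h \in V_h.
\end{equation*}
I would then test against $v_h \in \mathfrak{B}_h \subset \mathfrak{Z}_h$: the middle term vanishes because $\mathrm{d} v_h = 0$, while the last term vanishes because $p \in \mathfrak{H} \perp \mathfrak{B} \supset \mathfrak{B}_h$ and $p_h \in \mathfrak{H}_h \perp \mathfrak{B}_h$. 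What remains is $\langle \mathrm{d}(u - u_h), v_h\rangle = 0$ for all $v_h \in \mathfrak{B}_h$, i.e.\ $\mathrm{d}(u - u_h) \perp \mathfrak{B}_h$. As $\mathrm{d} u_h \in \mathfrak{B}_h$ already, this identifies $\mathrm{d} u_h = P_{\mathfrak{B}_h}\mathrm{d} u$, whence
\begin{equation*}
  \mathrm{d}(u - u_h) = (I - P_{\mathfrak{B}_h})\mathrm{d} u .
\end{equation*}

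Next I would exploit that $P_{\mathfrak{B}_h}$ realizes the best $W$-approximation in $\mathfrak{B}_h$, so that any competitor in $\mathfrak{B}_h$ furnishes an upper bound. The natural competitor is $\pi_h \mathrm{d} u$: since $\mathrm{d} u \in \mathfrak{B} \subset \mathfrak{Z} \subset V$, the commuting relation gives $\pi_h \mathrm{d} u = \mathrm{d}\pi_h u$, and because $\pi_h u \in V_h$ with $\mathrm{d} V_h \subset V_h$, this element lies in $\mathfrak{B}_h$. Hence
\begin{equation*}
  \bigl\lVert (I - P_{\mathfrak{B}_h})\mathrm{d} u \bigr\rVert \leq \bigl\lVert \mathrm{d} u - \pi_h \mathrm{d} u \bigr\rVert = \bigl\lVert (I - \pi_h)\mathrm{d} u \bigr\rVert .
\end{equation*}

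Finally I would invoke quasi-optimality of $\pi_h$, now in the $W$-norm rather than the $V$-norm: using the standing assumption of this section that $\pi_h$ is bounded on $W$, together with $\pi_h v_h = v_h$ for every $v_h \in V_h$, I would write $(I - \pi_h)\mathrm{d} u = (I - \pi_h)(\mathrm{d} u - v_h)$ for arbitrary $v_h \in V_h$ and take the infimum to obtain $\lVert (I - \pi_h)\mathrm{d} u\rVert \leq C\,E(\mathrm{d} u)$ with $C = \lVert I - \pi_h \rVert_{\mathcal{L}(W)}$. Chaining the three displays gives the claim. The main thing to get right is the identification $\mathrm{d} u_h = P_{\mathfrak{B}_h}\mathrm{d} u$: this is exactly where the variational crime could interfere, so one must check carefully that \emph{both} $p$ and $p_h$ drop out when tested against $\mathfrak{B}_h$. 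The secondary point requiring care is the bookkeeping that $\pi_h$ acts consistently on $W$ (and not merely on $V$), which is precisely why the extra hypothesis of $W$-boundedness of $\pi_h$ was introduced at the start of the section.
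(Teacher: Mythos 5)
Your proof is correct and follows essentially the same route as the paper: both rest on the identification $\mathrm{d}u_h = P_{\mathfrak{B}_h}\mathrm{d}u$ (the paper obtains it as $\mathrm{d}u_h = P_{\mathfrak{B}_h}f = P_{\mathfrak{B}_h}P_{\mathfrak{B}}f$, you via the error equation tested against $\mathfrak{B}_h$, which is the same fact), followed by comparison with the competitor $\pi_h\mathrm{d}u \in \mathfrak{B}_h$ and $W$-quasi-optimality of $\pi_h$. Your explicit verification that both $p$ and $p_h$ drop out, and your remark on the need for $W$-boundedness of $\pi_h$, are accurate and simply make explicit what the paper leaves implicit.
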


\begin{proof}
  Since $ \mathrm{d} u _h = P _{ \mathfrak{B} _h } f = P _{
    \mathfrak{B} _h } P _{ \mathfrak{B} } f = P _{ \mathfrak{B} _h }
  \mathrm{d} u $ and $ \pi _h \mathrm{d} u \in \mathfrak{B} _h $, it
  follows that $ \bigl\lVert \mathrm{d} ( u - u _h ) \bigr\rVert =
  \bigl\lVert ( I - P _{ \mathfrak{B} _h } ) \mathrm{d} u \bigr\rVert
  \leq \bigl\lVert ( I - \pi _h ) \mathrm{d} u \bigr\rVert \leq C E (
  \mathrm{d} u ) $.
\end{proof}

\begin{theorem}
  $ \lVert P _{ \mathfrak{B}  } u - P _{ \mathfrak{B}  _h } u _h
  \rVert \leq C \Bigl( E ( P _{ \mathfrak{B}  } u ) + \eta \bigl[ E (
  \mathrm{d} u ) + E ( p ) \bigr] \Bigr) $.
\end{theorem}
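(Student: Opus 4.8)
The plan is to insert the $W$-orthogonal projection $ P _{ \mathfrak{B} _h } u $ of the exact solution and split
\[
  P _{ \mathfrak{B} } u - P _{ \mathfrak{B} _h } u _h = ( I - P _{ \mathfrak{B} _h } ) P _{ \mathfrak{B} } u + P _{ \mathfrak{B} _h } ( u - u _h ) .
\]
Here I would first observe that $ P _{ \mathfrak{B} _h } u = P _{ \mathfrak{B} _h } P _{ \mathfrak{B} } u $: since $ u \perp \mathfrak{H} $, its Hodge decomposition is $ u = P _{ \mathfrak{B} } u + P _{ \mathfrak{B} ^\ast } u $, and the second summand lies in $ \mathfrak{B} ^\ast = \mathfrak{Z} ^{ \perp _W } $, hence is $W$-orthogonal to $ \mathfrak{B} _h \subset \mathfrak{Z} $. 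For the first term, because $ \pi _h $ commutes with $ \mathrm{d} $ it maps $ \mathfrak{B} = \mathcal{R} ( \mathrm{d} ) $ into $ \mathfrak{B} _h $, so $ \pi _h P _{ \mathfrak{B} } u $ competes against the orthogonal projection; the $W$-quasi-optimality of $ \pi _h $ (valid here because $ \pi _h $ is assumed bounded on $W$ throughout this section) then gives $ \lVert ( I - P _{ \mathfrak{B} _h } ) P _{ \mathfrak{B} } u \rVert \leq \lVert ( I - \pi _h ) P _{ \mathfrak{B} } u \rVert \leq C E ( P _{ \mathfrak{B} } u ) $.

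The main work is the second term, for which I would use an $ L ^2 $-lifting argument. Put $ b = P _{ \mathfrak{B} _h } ( u - u _h ) \in \mathfrak{B} _h $, so that $ \lVert b \rVert ^2 = \langle u - u _h , b \rangle $. As $ b \in \mathfrak{B} _h $, there is a unique $ \psi _h \in \mathfrak{Z} _h ^\perp $ with $ \mathrm{d} \psi _h = b $, and \autoref{lem:discretePoincare} bounds $ \lVert \psi _h \rVert _V \leq C \lVert b \rVert $. Writing $ \langle u - u _h , b \rangle = \langle u - u _h , \mathrm{d} \psi _h \rangle $ and substituting $ v _h = \psi _h $ into the Galerkin error relation coming from \eqref{eqn:hodgeDirac} and \eqref{eqn:discreteHodgeDirac}---which, after the harmonic terms cancel exactly as in the proof of \autoref{thm:errorEstimate}, reduces to $ \langle \mathrm{d} ( u - u _h ) , v _h \rangle + \langle u - u _h , \mathrm{d} v _h \rangle + \langle p - p _h , v _h \rangle = 0 $ for all $ v _h \in V _h $---yields $ \lVert b \rVert ^2 = - \langle \mathrm{d} ( u - u _h ) , \psi _h \rangle - \langle p - p _h , \psi _h \rangle $.

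It then remains to estimate the two pairings, and extracting a factor of $ \eta $ from each is the heart of the matter. The key tool is \autoref{lem:improvedEstLemma}, which gives $ \lVert \psi _h - P _{ \mathfrak{B} ^\ast } \psi _h \rVert \leq \eta \lVert \mathrm{d} \psi _h \rVert = \eta \lVert b \rVert $. For the first pairing, \autoref{thm:duhEstimate} shows $ \mathrm{d} ( u - u _h ) = ( I - P _{ \mathfrak{B} _h } ) \mathrm{d} u \in \mathfrak{B} $, which is $W$-orthogonal to $ P _{ \mathfrak{B} ^\ast } \psi _h \in \mathfrak{Z} ^{ \perp _W } $; hence only the component $ \psi _h - P _{ \mathfrak{B} ^\ast } \psi _h $ survives, and $ \lvert \langle \mathrm{d} ( u - u _h ) , \psi _h \rangle \rvert \leq \lVert \mathrm{d} ( u - u _h ) \rVert \, \eta \lVert b \rVert \leq C \eta \, E ( \mathrm{d} u ) \lVert b \rVert $. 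The $p$-term is the delicate one: a naive estimate produces only $ \eta \lVert p \rVert $ or $ E ( p ) $, rather than the required product $ \eta E ( p ) $, so I would exploit two orthogonalities at once. Since $ p _h \in \mathfrak{H} _h $ and $ \pi _h p \in \mathfrak{Z} _h $ are both $W$-orthogonal to $ \psi _h \in \mathfrak{Z} _h ^\perp $, the term collapses to $ \langle ( I - \pi _h ) p , \psi _h \rangle $; and because $ p $ and $ \pi _h p $ both lie in $ \mathfrak{Z} $, they are orthogonal to $ P _{ \mathfrak{B} ^\ast } \psi _h \in \mathfrak{Z} ^{ \perp _W } $, so this reduces further to $ \langle ( I - \pi _h ) p , \psi _h - P _{ \mathfrak{B} ^\ast } \psi _h \rangle \leq C \eta \, E ( p ) \lVert b \rVert $ by $W$-quasi-optimality and \autoref{lem:improvedEstLemma}. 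Dividing by $ \lVert b \rVert $ gives $ \lVert P _{ \mathfrak{B} _h } ( u - u _h ) \rVert \leq C \eta \bigl( E ( \mathrm{d} u ) + E ( p ) \bigr) $, and combining with the first term finishes the proof. I expect the only genuine obstacle to be the careful bookkeeping of which subspace each quantity inhabits, so that the two orthogonalities can be chained to land precisely the product $ \eta E ( p ) $ rather than a weaker bound.
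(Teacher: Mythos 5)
Your proof is correct and follows essentially the same route as the paper's: the same splitting via $P_{\mathfrak{B}_h}$, the same duality argument in which $b = P_{\mathfrak{B}_h}(u-u_h)$ is lifted to $\psi_h \in \mathfrak{Z}_h^\perp = \mathfrak{B}_h^\ast$ (the paper's $w_h = K_h e$), and the same use of \autoref{lem:improvedEstLemma} to extract the factor $\eta$ from both pairings. The only cosmetic difference is in the $p$-term, where you subtract $\pi_h p$ and pair against $\psi_h - P_{\mathfrak{B}^\ast}\psi_h$, while the paper subtracts $P_{\mathfrak{H}_h}p$ and pairs against $w_h - w$; both rest on the identical orthogonalities and give the same bound.
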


\begin{proof}
  We begin by writing
  \begin{align*}
    \lVert P _{ \mathfrak{B}  } u - P _{ \mathfrak{B}  _h } u _h
    \rVert &\leq \lVert P _{ \mathfrak{B}  } u - P _{ \mathfrak{B}
      _h } P _{ \mathfrak{B}  } u \rVert + \lVert P _{
      \mathfrak{B}  _h } P _{ \mathfrak{B}  } u - P _{ \mathfrak{B}
      _h } u _h \rVert \\
    &\leq \bigl\lVert ( I - P _{ \mathfrak{B}  _h } ) P _{
      \mathfrak{B}  } u \bigr\rVert + \bigl\lVert P _{ \mathfrak{B}
      _h } ( u - u _h ) \bigr\rVert .
  \end{align*} 
  For the first term, optimality of $ P _{ \mathfrak{B} _h } $ and
  quasi-optimality of $ \pi _h $, together with the fact that $ \pi _h
  P _{ \mathfrak{B} } u \in \mathfrak{B} _h $, implies
  \begin{equation*}
    \bigl\lVert ( I - P _{ \mathfrak{B}  _h } ) P _{ \mathfrak{B}  } u
    \bigr\rVert \leq \bigl\lVert ( I - \pi _h ) P _{ \mathfrak{B}  } u
    \bigr\rVert \leq C E ( P _{ \mathfrak{B}  } u ) .
  \end{equation*} 
  The remaining term will now be bounded using a duality-type
  argument.

  Let $ e = P _{ \mathfrak{B} _h } ( u - u _h ) $, $ w = K e $, and $
  w _h = K _h e $. Since $ e \in \mathfrak{B} _h \subset \mathfrak{B}
  $, we have $ e = \mathrm{d} w = \mathrm{d} \pi _h w = \mathrm{d} w
  _h $. In particular, this implies that $ \mathrm{d} ( \pi _h w - w
  _h ) = 0 $, so $ \pi _h w - w _h \in \mathfrak{Z} _h \subset
  \mathfrak{Z} $ is orthogonal to both $ w \in \mathfrak{B} ^\ast $
  and $ w _h \in \mathfrak{B} _h ^\ast $, and hence to $ w - w _h
  $. Therefore, the Pythagorean theorem and Lemma
  \ref{lem:improvedEstLemma} imply that
  \begin{equation*}
    \lVert w - w _h \rVert \leq \bigl\lVert ( I - \pi _h ) w
    \bigr\rVert = \bigl\lVert ( I - \pi _h ) K e \bigr\rVert \leq
    \eta \lVert e \rVert .
  \end{equation*} 
  Now, using $ e = \mathrm{d} w _h $ and the variational principles
  \eqref{eqn:hodgeDirac} and \eqref{eqn:discreteHodgeDirac}, we have
  \begin{equation*} 
    \lVert e \rVert ^2 = \langle e, \mathrm{d} w _h \rangle = \langle
    u - u _h, \mathrm{d} w _h \rangle 
    = - \bigl\langle \mathrm{d} ( u
    - u _h ) + ( p - p _h ) , w _h \bigr\rangle .
  \end{equation*} 
  Furthermore, since $ w _h \in \mathfrak{B} _h ^\ast \perp
  \mathfrak{H} _h $ and $ w \in \mathfrak{B} ^\ast \perp \mathfrak{Z}
  $, we can write this as
  \begin{align*}
    \lVert e \rVert ^2 &= - \bigl\langle \mathrm{d} ( u - u _h ) + ( p
    - P _{ \mathfrak{H}  _h } p ) , w _h - w \bigr\rangle \\
    &\leq \Bigl[ \bigl\lVert \mathrm{d} ( u - u _h ) \bigr\rVert +
    \bigl\lVert ( I - P _{ \mathfrak{H} _h } ) p \bigr\rVert \Bigr]
    \lVert w _h - w \rVert.
  \end{align*} 
  We already have the estimate $ \bigl\lVert \mathrm{d} ( u - u _h )
  \bigr\rVert \leq C E ( \mathrm{d} u ) $, while
  \begin{equation*}
    \bigl\lVert ( I - P _{ \mathfrak{H}  _h } ) p \bigr\rVert =
    \bigl\lVert ( I - P _{ \mathfrak{Z}  _h } ) p \bigr\rVert \leq
    \bigl\lVert ( I - \pi _h ) p \bigr\rVert \leq C E (p) .
  \end{equation*} 
  Combining these with $ \lVert w - w _h \rVert \leq \eta \lVert e
  \rVert $ and dividing through by $ \lVert e \rVert $, we finally
  obtain
  \begin{equation*}
    \lVert e \rVert \leq C \eta  \bigl[ E ( \mathrm{d} u ) + E (p)
    \bigr] ,
  \end{equation*} 
  which completes the proof.
\end{proof}
 
\begin{theorem}
  $ \lVert P _{ \mathfrak{B}  ^\ast } u - P _{ \mathfrak{B}  _h ^\ast
  } u _h \rVert \leq C \bigl[ E ( P _{ \mathfrak{B}  ^\ast } u ) +
  \eta E ( \mathrm{d} u ) \bigr] $.
\end{theorem}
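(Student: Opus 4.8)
The plan is to follow the architecture of the preceding theorem: bound the error by the triangle inequality
\[
  \lVert P_{\mathfrak{B}^\ast} u - P_{\mathfrak{B}_h^\ast} u_h \rVert \le \bigl\lVert (I - P_{\mathfrak{B}_h^\ast}) P_{\mathfrak{B}^\ast} u \bigr\rVert + \bigl\lVert P_{\mathfrak{B}_h^\ast}(P_{\mathfrak{B}^\ast} u - u_h) \bigr\rVert ,
\]
and treat the two pieces separately. The first piece is the clean one. Since $ P_{\mathfrak{B}^\ast} u \in \mathfrak{B}^\ast = \mathfrak{Z}^\perp $ while $ \mathfrak{Z}_h \subset \mathfrak{Z} $, we have $ P_{\mathfrak{Z}_h} P_{\mathfrak{B}^\ast} u = 0 $; decomposing $ V_h = \mathfrak{Z}_h \oplus \mathfrak{B}_h^\ast $ then gives $ P_{\mathfrak{B}_h^\ast} P_{\mathfrak{B}^\ast} u = P_h P_{\mathfrak{B}^\ast} u $, the full $W$-orthogonal projection onto $ V_h $. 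Hence the first piece equals $ E(P_{\mathfrak{B}^\ast} u) $ exactly, with no constant lost, and this is where the $ E(P_{\mathfrak{B}^\ast} u) $ term originates.

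For the second piece I would run a duality argument, now with the roles of $ \mathrm{d} $ and $ \mathrm{d}^\ast $ interchanged relative to the previous theorem. Set $ e = P_{\mathfrak{B}_h^\ast}(P_{\mathfrak{B}^\ast} u - u_h) \in \mathfrak{B}_h^\ast $ and let $ \xi_h = K_h e \in \mathfrak{B}_h $ be the discrete dual solution, so that $ \mathrm{d}_h^\ast \xi_h = e $ and $ \mathrm{d} \xi_h = 0 $. Using $ P_{\mathfrak{B}^\ast} u = K \mathrm{d} u $, the self-adjointness of $ K $, and the identity $ \langle e, u_h \rangle = \langle \xi_h, \mathrm{d} u_h \rangle = \langle \xi_h, \mathrm{d} u \rangle $, a short computation collapses $ \lVert e \rVert^2 = \langle e, P_{\mathfrak{B}^\ast} u - u_h \rangle $ to the single pairing $ \lVert e \rVert^2 = \langle K e - K_h e, \mathrm{d} u \rangle $ of the dual approximation error against $ \mathrm{d} u $. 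Testing the error equation obtained by subtracting \eqref{eqn:discreteHodgeDirac} from \eqref{eqn:hodgeDirac} against $ \xi_h $ shows moreover that $ \langle \mathrm{d}(u - u_h), \xi_h \rangle = 0 $ and that both $ \langle p, \xi_h \rangle $ and $ \langle p_h, \xi_h \rangle $ vanish, because $ \xi_h \in \mathfrak{B}_h $ is orthogonal to $ \mathfrak{H} $, to $ \mathfrak{H}_h $, and to $ \mathrm{d}(u - u_h) \perp \mathfrak{B}_h $. This cancellation of the pressure and harmonic contributions is precisely what accounts for the absence of any $ E(p) $ or $ \mu $ term in the stated bound, in contrast with the previous theorem.

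It then remains to extract the product $ \eta E(\mathrm{d} u) $ from $ \langle K e - K_h e, \mathrm{d} u \rangle $, and this is the main obstacle. Writing $ \mathrm{d} u = \mathrm{d} u_h + \mathrm{d}(u - u_h) $, the factor $ E(\mathrm{d} u) $ should come from $ \lVert \mathrm{d}(u - u_h) \rVert \le C E(\mathrm{d} u) $ (\autoref{thm:duhEstimate}) and the factor $ \eta $ from \autoref{lem:improvedEstLemma} applied to the dual solution, which controls $ K e - K_h e $ in terms of $ \eta $. The difficulty is that, unlike in the previous theorem---where $ e $ lay in $ \mathfrak{B}_h \subset \mathfrak{B} $, so that $ K e $ sat cleanly in $ \mathfrak{B}^\ast $ and the Pythagorean comparison $ \pi_h(\cdot) - (\cdot)_h \in \mathfrak{Z}_h $ held outright---here $ e \in \mathfrak{B}_h^\ast $, which is \emph{not} contained in $ \mathfrak{B}^\ast $. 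Consequently $ K e $ acquires spurious components, and the naive Cauchy--Schwarz split of $ \langle K e - K_h e, \mathrm{d} u_h \rangle $ and $ \langle K e - K_h e, \mathrm{d}(u - u_h) \rangle $ is too lossy: each piece alone yields only one of the two required factors. I expect to have to treat $ \langle K e - K_h e, \mathrm{d} u \rangle $ as a genuine Aubin--Nitsche-type double-error pairing, routing every orthogonality through the commuting projection $ \pi_h $ (via $ \pi_h \mathrm{d} = \mathrm{d} \pi_h $ and quasi-optimality) so that the large $ \mathrm{d} u_h $ contribution is absorbed and only terms carrying both $ \eta $ and $ E(\mathrm{d} u) $ survive. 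Combining the two pieces through the triangle inequality then gives the claim.
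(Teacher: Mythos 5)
Your decomposition into the two pieces, and your treatment of the first piece (showing $ P _{ \mathfrak{B} _h ^\ast } P _{ \mathfrak{B} ^\ast } u = P _h P _{ \mathfrak{B} ^\ast } u $ so that this piece equals $ E ( P _{ \mathfrak{B} ^\ast } u ) $ exactly), coincide with the paper's proof. The duality identity you derive for the second piece, $ \lVert e \rVert ^2 = \langle K e - K _h e , \mathrm{d} u \rangle $ with $ e = P _{ \mathfrak{B} _h ^\ast } ( P _{ \mathfrak{B} ^\ast } u - u _h ) $, is also correct. But the proof does not close: the entire content of the theorem is the extraction of the factor $ \eta E ( \mathrm{d} u ) $ from that pairing, and you explicitly stop short of it, correctly observing that Cauchy--Schwarz against the $O(1)$ quantity $ \mathrm{d} u $ is too lossy and that the previous theorem's Pythagorean argument fails because $ \mathfrak{B} _h ^\ast \not\subset \mathfrak{B} ^\ast $, so $ K e $ acquires a component in $ \mathfrak{B} $ as well as $ \mathfrak{B} ^\ast $ and there is no clean analogue of \autoref{lem:improvedEstLemma} for $ K e - K _h e $. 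Your diagnosis of the obstacle is accurate, but "I expect to have to treat this as a double-error pairing routed through $ \pi _h $" is a statement of intent, not an argument; as written this is a genuine gap at the central step.

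For comparison, the paper sidesteps the troublesome quantity $ K e - K _h e $ entirely. It inserts the commuting projection \emph{before} projecting onto $ \mathfrak{B} _h ^\ast $, setting $ v _h = P _{ \mathfrak{B} _h ^\ast } ( \pi _h P _{ \mathfrak{B} ^\ast } u - u _h ) $ (the discrepancy $ P _{ \mathfrak{B} _h ^\ast } ( I - \pi _h ) P _{ \mathfrak{B} ^\ast } u $ costs only another $ C E ( P _{ \mathfrak{B} ^\ast } u ) $), and then runs the duality on $ v = P _{ \mathfrak{B} ^\ast } v _h $ via \autoref{lem:improvedEstLemma}. The point is that $ \mathrm{d} v _h = \pi _h \mathrm{d} u - \mathrm{d} u _h $ is \emph{already} of size $ C E ( \mathrm{d} u ) $, so the lemma's bound $ \lVert v _h - v \rVert \leq \eta \lVert \mathrm{d} v _h \rVert $ immediately produces the product $ \eta E ( \mathrm{d} u ) $; and the residual term $ \langle v , u - u _h \rangle = \langle K v , \mathrm{d} ( u - u _h ) \rangle $ pairs $ ( I - \pi _h ) K v $ (size $ \eta \lVert v _h \rVert $) against $ \mathrm{d} ( u - u _h ) $ (size $ C E ( \mathrm{d} u ) $), with the $ \pi _h K v $ contribution vanishing because $ \pi _h K v \in \mathfrak{B} _h \perp \mathrm{d} ( u - u _h ) $. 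That is precisely the "both factors small" pairing you were looking for; the missing idea is to apply $ \pi _h $ to $ P _{ \mathfrak{B} ^\ast } u $ first, so that every object entering the duality argument is already an error term.
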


\begin{proof}
  As in the previous proof, we begin by using the triangle inequality
  to split this into two pieces,
  \begin{align*}
    \lVert P _{ \mathfrak{B} ^\ast } u - P _{ \mathfrak{B} _h ^\ast }
    u _h \rVert &\leq \lVert P _{ \mathfrak{B} ^\ast } u - P _{
      \mathfrak{B} _h ^\ast } P _{ \mathfrak{B} ^\ast } u \rVert +
    \lVert P _{ \mathfrak{B} _h ^\ast } P _{ \mathfrak{B} ^\ast } u -
    P _{ \mathfrak{B}  _h ^\ast } u _h \rVert\\
    &= \bigl\lVert ( I - P _{ \mathfrak{B} _h ^\ast } ) P _{
      \mathfrak{B} ^\ast } u \bigr\rVert + \bigl\lVert P _{
      \mathfrak{B} _h ^\ast } ( P _{ \mathfrak{B} ^\ast } u - u _h )
    \bigr\rVert .
  \end{align*} 
  Observe that, since $ \mathfrak{Z} _h \subset \mathfrak{Z} \perp
  \mathfrak{B} ^\ast $, it follows that
  \begin{equation*}
    P _h P _{ \mathfrak{B}  ^\ast } u = P _{ \mathfrak{Z}  _h } P _{
      \mathfrak{B}  ^\ast } u + P _{ \mathfrak{B}  ^\ast _h } P _{
      \mathfrak{B}  ^\ast } u = P _{ \mathfrak{B}  _h ^\ast } P _{
      \mathfrak{B}  ^\ast } u .
  \end{equation*} 
  Therefore,
  \begin{equation*}
    \bigl\lVert ( I - P _{ \mathfrak{B}  _h ^\ast } ) P _{
      \mathfrak{B}  ^\ast } u \bigr\rVert =     \bigl\lVert ( I - P _h
    ) P _{  \mathfrak{B}  ^\ast } u \bigr\rVert = E ( P _{
      \mathfrak{B}  ^\ast } u ) .
  \end{equation*} 
  For the second piece, let $ v _h = P _{ \mathfrak{B} _h ^\ast } (
  \pi _h P _{ \mathfrak{B} ^\ast } u - u _h ) $, so that
  \begin{equation*}
    \bigl\lVert P _{  \mathfrak{B}  _h ^\ast } ( P _{ \mathfrak{B}
      ^\ast } u - u _h ) \bigr\rVert \leq \bigl\lVert P _{
      \mathfrak{B}  _h ^\ast } ( I - \pi _h ) P _{ \mathfrak{B}  ^\ast
    } u \bigr\rVert  + \lVert v _h \rVert \leq C E ( P _{ \mathfrak{B}
      ^\ast } u ) + \lVert v _h \rVert .
  \end{equation*} 
  It now suffices to control $ \lVert v _h \rVert $.

  To do so, we let $ v = P _{ \mathfrak{B} ^\ast } v _h $, as in
  \autoref{lem:improvedEstLemma}, and observe that
  \begin{equation*}
    \langle v , v _h \rangle = \bigl\langle v, v _h + P _{
      \mathfrak{Z}  _h } ( \pi _h P _{ \mathfrak{B} ^\ast } u - u
    _h ) \bigr\rangle = \langle v, \pi _h P _{ \mathfrak{B}  ^\ast } u - u
    _h \rangle .
  \end{equation*} 
  Therefore,
  \begin{align*}
    \lVert v _h \rVert ^2 &= \langle v _h - v , v _h \rangle + \langle
    v, v _h \rangle \\
    &= \langle v _h - v , v _h \rangle + \langle v, \pi _h P _{
      \mathfrak{B} ^\ast } u - u _h \rangle \\
    &= \langle v _h - v , v _h \rangle + \langle v, \pi _h P _{
      \mathfrak{B} ^\ast } u - P _{ \mathfrak{B} ^\ast } u \rangle +
    \langle v, P _{ \mathfrak{B} ^\ast } u - u _h \rangle .
  \end{align*} 
  For the first two terms, \autoref{lem:improvedEstLemma} implies
  \begin{align*}
    \langle v _h - v , v _h \rangle + \langle v, \pi _h P _{
      \mathfrak{B} ^\ast } u - P _{ \mathfrak{B} ^\ast } u \rangle
    &\leq \lVert v _h - v \rVert \lVert v _h \rVert + \lVert v \rVert
    \bigl\lVert ( I - \pi _h ) P _{ \mathfrak{B} ^\ast } u
    \bigr\rVert \\
    &\leq \bigl[ \eta \lVert \mathrm{d} v _h \rVert + C E ( P _{
      \mathfrak{B} ^\ast } u ) \bigr] \lVert v _h \rVert .
  \end{align*} 
  Now, $ \mathrm{d} v _h = \pi _h \mathrm{d} u - \mathrm{d} u _h $, so
  \autoref{thm:duhEstimate} implies that
  \begin{equation*}
    \lVert \mathrm{d} v _h \rVert \leq \bigl\lVert ( I - \pi _h )
    \mathrm{d} u \bigr\rVert + \bigl\lVert \mathrm{d} ( u - u _h )
    \bigr\rVert \leq C E ( \mathrm{d} u ) .
  \end{equation*} 
  Finally, it remains to control $ \langle v, u - u _h \rangle
  $. Since $ v \in \mathfrak{B} ^\ast $ and $ P _{ \mathfrak{B} ^\ast
  } = K \mathrm{d} $,
  \begin{align*}
    \langle v, u - u _h \rangle &= \langle K v , \mathrm{d} ( u - u _h
    ) \bigr\rangle \\
    &= \bigl\langle ( I - \pi _h ) K v , \mathrm{d} ( u - u _h )
    \bigr\rangle + \bigl\langle \pi _h K v , \mathrm{d} ( u - u _h )
    \bigr\rangle .
  \end{align*} 
  For the first term, another application of \autoref{thm:duhEstimate}
  yields
 \begin{equation*}
    \bigl\langle ( I - \pi _h ) K v , \mathrm{d} ( u - u _h )
    \bigr\rangle \leq \bigl\lVert ( I - \pi _h ) K v \bigr\rVert
    \bigl\lVert \mathrm{d} ( u - u _h ) \bigr\rVert \leq C \eta E (
    \mathrm{d} u ) \lVert v _h \rVert .
  \end{equation*} 
  On the other hand, the second term vanishes: since $ v \in
  \mathfrak{B} ^\ast $, we have $ K v \in \mathfrak{B} $ and $ \pi _h
  K v \in \mathfrak{B} _h $, while $ P _{ \mathfrak{B} _h } (
  \mathrm{d} u - \mathrm{d} u _h ) = \mathrm{d} u _h - \mathrm{d} u _h
  = 0 $. Hence, we have finally shown that $ \lVert v _h \rVert \leq C
  \bigl[ E ( P _{ \mathfrak{B} ^\ast } u ) + \eta E ( \mathrm{d} u )
  \bigr] $, so the result follows.
\end{proof}

\begin{theorem}
  $ \lVert p - p _h \rVert \leq C \bigl[ E (p) + \mu E ( \mathrm{d} u
  ) \bigr] $.
\end{theorem}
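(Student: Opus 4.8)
The plan is to split the error by the triangle inequality,
$\lVert p - p_h \rVert \leq \lVert p - P_{\mathfrak{H}_h} p \rVert + \lVert P_{\mathfrak{H}_h} p - p_h \rVert$,
and to bound the two pieces separately. The first piece measures how well the continuous harmonic form $p$ is approximated within $\mathfrak{H}_h$ and should account for the $E(p)$ term; the second piece compares two \emph{discrete} harmonic forms, and this is where the duality argument and the factor $\mu$ will enter.

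For the first piece, I would observe that since $p \in \mathfrak{H} \perp \mathfrak{B} \supset \mathfrak{B}_h$, the $\mathfrak{B}_h$-component of $p$ vanishes, whence $P_{\mathfrak{H}_h} p = P_{\mathfrak{Z}_h} p$. Thus $\lVert p - P_{\mathfrak{H}_h} p \rVert = \bigl\lVert ( I - P_{\mathfrak{Z}_h} ) p \bigr\rVert$, and since $\pi_h p \in \mathfrak{Z}_h$ (because $\mathrm{d} \pi_h p = \pi_h \mathrm{d} p = 0$), the optimality of the $W$-orthogonal projection $P_{\mathfrak{Z}_h}$, together with the $W$-boundedness of $\pi_h$ assumed in this section, gives $\bigl\lVert ( I - P_{\mathfrak{Z}_h} ) p \bigr\rVert \leq \bigl\lVert ( I - \pi_h ) p \bigr\rVert \leq C E(p)$, just as in \eqref{eqn:p-q}.

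For the second piece, I would set $e = P_{\mathfrak{H}_h} p - p_h \in \mathfrak{H}_h$, so that $\mathrm{d} e = 0$. Subtracting the variational problems \eqref{eqn:hodgeDirac} and \eqref{eqn:discreteHodgeDirac} tested against $v = e$ (the $\langle \cdot, \mathrm{d} e \rangle$ terms drop out) yields $\langle \mathrm{d}( u - u_h ), e \rangle + \langle p - p_h, e \rangle = 0$. Since $P_{\mathfrak{H}_h} p - p \perp \mathfrak{H}_h \ni e$, this gives $\lVert e \rVert^2 = \langle p - p_h, e \rangle = - \langle \mathrm{d}( u - u_h ), e \rangle$. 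Now $\mathrm{d}( u - u_h ) \in \mathfrak{B} \perp \mathfrak{H}$ while $e \in \mathfrak{H}_h \subset \mathfrak{Z}$, so I may replace $e$ by its $\mathfrak{B}$-component, obtaining $\langle \mathrm{d}( u - u_h ), e \rangle = \bigl\langle \mathrm{d}( u - u_h ), ( I - P_{\mathfrak{H}} ) e \bigr\rangle$.

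The crux is then to estimate $\bigl\lVert ( I - P_{\mathfrak{H}} ) e \bigr\rVert$ by $\mu \lVert e \rVert$, mirroring the harmonic-term argument in \autoref{thm:errorEstimate}. The key observation is that $e \in V_h$ forces $\pi_h e = e$, so the sign identity $( I - \pi_h )( I - P_{\mathfrak{H}} ) e = -( I - \pi_h ) P_{\mathfrak{H}} e$ holds; moreover $\pi_h ( I - P_{\mathfrak{H}} ) e \in \mathfrak{B}_h$ is orthogonal to $( I - P_{\mathfrak{H}} ) e = e - P_{\mathfrak{H}} e$, so the Pythagorean theorem gives $\bigl\lVert ( I - P_{\mathfrak{H}} ) e \bigr\rVert \leq \bigl\lVert ( I - \pi_h ) P_{\mathfrak{H}} e \bigr\rVert \leq \mu \lVert e \rVert$. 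Combining this with $\bigl\lVert \mathrm{d}( u - u_h ) \bigr\rVert \leq C E( \mathrm{d} u )$ from \autoref{thm:duhEstimate} yields $\lVert e \rVert^2 \leq C \mu E( \mathrm{d} u ) \lVert e \rVert$, hence $\lVert P_{\mathfrak{H}_h} p - p_h \rVert \leq C \mu E( \mathrm{d} u )$; adding the two pieces finishes the proof. I expect the main obstacle to be precisely this geometric estimate, specifically spotting the sign identity that converts $( I - P_{\mathfrak{H}} )$ into $P_{\mathfrak{H}}$ under $( I - \pi_h )$ by using $\pi_h e = e$.
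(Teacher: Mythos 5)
Your proof is correct and follows essentially the same route as the paper: the same split of $\lVert p - p_h \rVert$ at $P_{\mathfrak{H}_h} p$, the same use of \eqref{eqn:p-q} for the first piece, and the same Pythagorean/$\mu$ geometric estimate for the discrete-harmonic error. The only (harmless) variation is that you reach the second piece via the error equation $\lVert e \rVert^2 = -\langle \mathrm{d}(u-u_h), e\rangle$ together with \autoref{thm:duhEstimate}, whereas the paper uses the identity $p_h - P_{\mathfrak{H}_h} p = P_{\mathfrak{H}_h} \mathrm{d} u$ (from $p = P_{\mathfrak{H}} f$ and $p_h = P_{\mathfrak{H}_h} f$) and bounds $\lVert P_{\mathfrak{H}_h}\mathrm{d}u\rVert$ directly against $\lVert (I-\pi_h)\mathrm{d}u\rVert$; since $e = -P_{\mathfrak{H}_h}\mathrm{d}u$, these are two derivations of the same estimate.
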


\begin{proof}
  We begin by decomposing
  \begin{equation*}
    \lVert p - p _h \rVert ^2 = \lVert p - P _{ \mathfrak{H}  _h } p
    \rVert ^2 + \lVert P _{ \mathfrak{H}  _h } p - p _h \rVert ^2 .
  \end{equation*} 
  First, the inequality \eqref{eqn:p-q} implies that $ \lVert p - P _{
    \mathfrak{H} _h } p \rVert \leq \bigl\lVert ( I - \pi _h ) p
  \bigr\rVert \leq C E (p) $.  For the second term, we observe that $
  p = P _\mathfrak{H} f $, while $ p _h = P _{ \mathfrak{H} _h } f
  $. Therefore,
  \begin{equation*}
    p _h - P _{ \mathfrak{H}  _h } p = P _{ \mathfrak{H}  _h } ( I - P
    _{ \mathfrak{H}  } ) f = P _{ \mathfrak{H}  _h } ( P _{
      \mathfrak{B}  } + P _{ \mathfrak{B}  ^\ast } ) f = P _{
      \mathfrak{H}  _h } P _{ \mathfrak{B}  } f = P _{ \mathfrak{H}
      _h } \mathrm{d} u ,
  \end{equation*} 
  where we have used the fact that $ P _{ \mathfrak{H} _h } P _{
    \mathfrak{B} ^\ast } = 0 $, since $ \mathfrak{H} _h \subset
  \mathfrak{Z} _h \subset \mathfrak{Z} \perp \mathfrak{B} ^\ast
  $. Now, using the orthogonality of the Hodge decomposition and the
  fact that $ \pi _h \mathrm{d} u = \mathrm{d} \pi _h u \in
  \mathfrak{B} _h \subset \mathfrak{B} $, we get
  \begin{align*}
    \lVert P _{ \mathfrak{H} _h } \mathrm{d} u \rVert ^2 &= \langle
    \mathrm{d} u , P _{ \mathfrak{H} _h } \mathrm{d} u
    \rangle \\
    &= \bigl\langle ( I - \pi _h ) \mathrm{d} u , ( I - P _{
      \mathfrak{H} } ) P _{ \mathfrak{H} _h } \mathrm{d} u
    \bigr\rangle \\
    &\leq \bigl\lVert ( I - \pi _h ) \mathrm{d} u \bigr\rVert
    \bigl\lVert ( I - P _{ \mathfrak{H} } ) P _{ \mathfrak{H} _h }
    \mathrm{d} u \bigr\rVert .
  \end{align*} 
  Quasi-optimality of $ \pi _h $ immediately implies $ \bigl\lVert ( I
  - \pi _h ) \mathrm{d} u \bigr\rVert \leq C E ( \mathrm{d} u ) $. On
  the other hand, since $ P _{ \mathfrak{H} _h } \mathrm{d} u \in
  \mathfrak{Z} $, the Hodge decomposition implies that $ ( I - P _{
    \mathfrak{H} } ) P _{ \mathfrak{H} _h } \mathrm{d} u \in
  \mathfrak{B} $. Hence, $ \pi _h ( I - P _{ \mathfrak{H} } ) P _{
    \mathfrak{H} _h } \mathrm{d} u \in \mathfrak{B} _h $ is orthogonal
  to both $ P _{ \mathfrak{H} _h } \mathrm{d} u \in \mathfrak{H} _h $
  and $ P _{ \mathfrak{H} } P _{ \mathfrak{H} _h } \mathrm{d} u $, and
  thus to $ ( I - P _{ \mathfrak{H} } ) P _{ \mathfrak{H} _h }
  \mathrm{d} u $. Therefore, the Pythagorean theorem gives
  \begin{align*}
    \bigl\lVert ( I - P _{ \mathfrak{H} } ) P _{ \mathfrak{H} _h }
    \mathrm{d} u \bigr\rVert &\leq \bigl\lVert ( I - \pi _h ) ( I - P
    _{ \mathfrak{H} } ) P _{ \mathfrak{H} _h } \mathrm{d} u
    \bigr\rVert \\
    &= \bigl\lVert ( I - \pi _h ) P _{ \mathfrak{H} } P _{
      \mathfrak{H} _h } \mathrm{d} u \bigr\rVert \\
    &\leq \mu \bigl\lVert P _{ \mathfrak{H}  _h } \mathrm{d} u
    \bigr\rVert .
  \end{align*} 
  Finally, this shows that $ \lVert P _{ \mathfrak{H} _h } p - p _h
  \rVert \leq C \mu E ( \mathrm{d} u ) $, so the proof is complete.
\end{proof}
  
\section{Relationship to the Hodge--Laplace problem}
\label{sec:laplace}

In this section, we discuss the close relationship between the
Hodge--Dirac problem and the Hodge--Laplace problem, which has been
the main focus of much of the extant work on finite element exterior
calculus (particularly \citet{ArFaWi2010}). In fact, we show that some
of the key estimates of finite element exterior calculus may be
recovered as direct corollaries of the stability and convergence
results presented in the previous two sections.

\subsection{The Hodge--Laplace operator and mixed variational problem}

The \emph{abstract Hodge--Laplace operator} is $ L = \mathrm{D} ^2 =
\mathrm{d} \mathrm{d} ^\ast + \mathrm{d} ^\ast \mathrm{d} $, which is
defined on the domain $ \mathcal{D} (L) = \mathrm{D} ^{-1} ( V \cap V
^\ast ) \subset V \cap V ^\ast $ and has kernel $ \mathcal{N} (L) =
\mathcal{N} (\mathrm{D}) = \mathfrak{H} $. The Hodge--Laplace problem
is then the following: Given $ f \in W $, find $ ( u , p ) \in \bigl(
\mathcal{D} (L) \cap \mathcal{N} (L) ^\perp \bigr) \oplus \mathcal{N}
(L) $ such that
\begin{equation*}
  L u + p = f .
\end{equation*} 
To solve this, we may simply solve the Hodge--Dirac problem $
\mathrm{D} w + p = f $, and then solve another Hodge--Dirac problem $
\mathrm{D} u = w $. (Since $ w \perp \mathfrak{H} $, the harmonic part
is omitted from the second problem.) Therefore,
\begin{equation*}
  L u + p = \mathrm{D} ^2 u + p = \mathrm{D} ( \mathrm{D} u ) + p =
  \mathrm{D} w + p = f ,
\end{equation*} 
so $ ( u,p) $ is indeed a solution to the Hodge--Laplace problem, as
claimed.

To apply finite-element techniques to this problem, it must be put
into a variational form. Naively, we might think to use the
variational problem: Find $ ( u, p ) \in ( V \cap V ^\ast ) \times
\mathfrak{H} $ such that
\begin{equation*}
  \begin{alignedat}{2}
    \langle \mathrm{d} ^\ast u, \mathrm{d} ^\ast v \rangle + \langle
    \mathrm{d} u, \mathrm{d} v \rangle + \langle p, v \rangle &=
    \langle f, v \rangle , \quad &\forall v &\in V \cap V ^\ast  ,\\
    \langle u, q \rangle &= 0 , \quad &\forall q &\in \mathfrak{H} .
  \end{alignedat}
\end{equation*}
This problem is easily shown to be well-posed, by virtue of the
boundedness, symmetry, and coercivity of the bilinear form $ ( u, p;
v, q) \mapsto \langle \mathrm{d} ^\ast u , \mathrm{d} ^\ast v \rangle
+ \langle \mathrm{d} u , \mathrm{d} v \rangle + \langle p , v \rangle
+ \langle u , q \rangle $. In fact, the bilinear form gives an
equivalent inner product for the Hilbert space $ ( V \cap V ^\ast )
\times \mathfrak{H} $, so this is a simple application of the Riesz
representation theorem to the functional $ ( v, q ) \mapsto \langle f,
v \rangle $. Yet, despite the \emph{stability} of this formulation, it
is generally unsuitable for the finite element method due to
\emph{consistency} difficulties. In particular, for the case of the
Hodge--de~Rham complex, piecewise polynomial differential forms are
too regular to approximate singular elements of $ V \cap V ^\ast $,
cf.~\citet{BrFo1991,ArFaWi2010}.

Instead, it is preferable to use the following mixed variational
formulation: Find $ ( \sigma, u , p ) \in V \times V \times
\mathfrak{H} $ such that
\begin{equation}
  \label{eqn:hodgeLaplace}
  \begin{alignedat}{2}
    \langle \sigma ,\tau \rangle - \langle u , \mathrm{d} \tau \rangle
    &= 0 , \quad &\forall \tau &\in V,\\
    \langle \mathrm{d} \sigma, v \rangle + \langle \mathrm{d} u,
    \mathrm{d} v \rangle + \langle p, v \rangle &=
    \langle f, v \rangle , \quad &\forall v &\in V ,\\
    \langle u, q \rangle &= 0 , \quad &\forall q &\in \mathfrak{H} .
  \end{alignedat}
\end{equation}
Indeed, if $ ( \sigma, u , p ) $ is a solution, then the first line of
\eqref{eqn:hodgeLaplace} implies $ \sigma = \mathrm{d} ^\ast u $,
while the second line implies that
\begin{equation*}
  L u + p = \mathrm{d} \mathrm{d} ^\ast u + \mathrm{d} ^\ast
  \mathrm{d} u + p = \mathrm{d} \sigma + \mathrm{d} ^\ast \mathrm{d} u
  + p = f .
\end{equation*} 
(Note that $ \mathrm{d} \sigma + \mathrm{d} ^\ast \mathrm{d} u + p = f
$ is precisely the Hodge decomposition of $f$.) We now show that---as
with the non-variational form of the problems---we may solve the
Hodge--Laplace mixed variational problem \eqref{eqn:hodgeLaplace} by
simply solving the Hodge--Dirac problem \eqref{eqn:hodgeDirac} twice.

\begin{theorem}
  \label{thm:diracLaplace}
  If $ ( w, p ) \in V \times \mathfrak{H} $ solves the Hodge--Dirac
  variational problem for $f$, and $ ( u, 0 ) \in V \times
  \mathfrak{H} $ solves the Hodge--Dirac variational problem for $w$,
  then $ ( w - \mathrm{d} u , u , p ) \in V \times V \times
  \mathfrak{H} $ solves \eqref{eqn:hodgeLaplace}.
\end{theorem}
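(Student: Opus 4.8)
The plan is to set $ \sigma = w - \mathrm{d} u $ (keeping $u$ and $p$ as given) and verify the three lines of \eqref{eqn:hodgeLaplace} one at a time, using only the two hypothesized Hodge--Dirac problems and the nilpotency of $ \mathrm{d} $. Spelling out the hypotheses, the fact that $ (w,p) $ solves the Hodge--Dirac problem for $f$ means
\begin{equation*}
  \langle \mathrm{d} w, v \rangle + \langle w, \mathrm{d} v \rangle +
  \langle p, v \rangle = \langle f, v \rangle , \qquad \langle w, q
  \rangle = 0 ,
\end{equation*}
for all $ v \in V $ and $ q \in \mathfrak{H} $, while $ (u,0) $ solving the problem for $w$ (with vanishing harmonic part) means
\begin{equation*}
  \langle \mathrm{d} u, v \rangle + \langle u, \mathrm{d} v \rangle =
  \langle w, v \rangle , \qquad \langle u, q \rangle = 0 ,
\end{equation*}
again for all $ v \in V $ and $ q \in \mathfrak{H} $. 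The first thing to record is that, since $ \mathrm{d} $ is nilpotent, $ \mathrm{d} u \in \mathfrak{B} \subset \mathfrak{Z} \subset V $, so that $ \sigma = w - \mathrm{d} u $ genuinely lies in $V$ and, moreover, $ \mathrm{d} \sigma = \mathrm{d} w - \mathrm{d} ^2 u = \mathrm{d} w $.

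With this in hand, two of the three equations are essentially immediate. The third line of \eqref{eqn:hodgeLaplace}, namely $ \langle u, q \rangle = 0 $ for all $ q \in \mathfrak{H} $, is literally the second equation of the Hodge--Dirac problem for $w$. For the first line, I would simply rearrange the first equation of that same problem: from $ \langle \mathrm{d} u, \tau \rangle + \langle u, \mathrm{d} \tau \rangle = \langle w, \tau \rangle $ we obtain $ \langle \sigma, \tau \rangle = \langle w, \tau \rangle - \langle \mathrm{d} u, \tau \rangle = \langle u, \mathrm{d} \tau \rangle $, which is exactly $ \langle \sigma, \tau \rangle - \langle u, \mathrm{d} \tau \rangle = 0 $.

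The second line is the only step requiring an actual idea, and it is where I expect the (mild) main obstacle to lie. Using $ \mathrm{d} \sigma = \mathrm{d} w $, its left-hand side becomes $ \langle \mathrm{d} w, v \rangle + \langle \mathrm{d} u, \mathrm{d} v \rangle + \langle p, v \rangle $, so comparing with the $f$-equation it suffices to establish $ \langle \mathrm{d} u, \mathrm{d} v \rangle = \langle w, \mathrm{d} v \rangle $. The trick is to feed the test function $ \mathrm{d} v $ --- which lies in $ \mathfrak{B} \subset V $ whenever $ v \in V $ --- into the first equation of the Hodge--Dirac problem for $w$; the resulting cross term $ \langle u, \mathrm{d} ( \mathrm{d} v ) \rangle $ vanishes by nilpotency, leaving precisely $ \langle \mathrm{d} u, \mathrm{d} v \rangle = \langle w, \mathrm{d} v \rangle $. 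Substituting this back and invoking the $f$-equation finishes the verification. Apart from this one substitution, the argument is pure bookkeeping; the only points needing care are the domain memberships (that both $ \sigma $ and $ \mathrm{d} v $ lie in $V$), each of which follows from nilpotency.
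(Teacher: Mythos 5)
Your proposal is correct and follows essentially the same route as the paper's proof: the first and third lines are read off directly from the Hodge--Dirac problem for $w$, and the second line is verified by testing that same problem against $\mathrm{d}v$ (equivalently, inserting the vanishing term $\langle u, \mathrm{d}\mathrm{d}v\rangle$) to convert $\langle \mathrm{d}u,\mathrm{d}v\rangle$ into $\langle w,\mathrm{d}v\rangle$ and then invoking the $f$-equation. No gaps.
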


\begin{proof}
  For any $ \tau \in V $, we have
  \begin{equation*}
    \langle w - \mathrm{d} u , \tau \rangle - \langle u , \mathrm{d}
    \tau \rangle = \langle w, \tau \rangle - \bigl(  \langle
    \mathrm{d} u , \tau \rangle + \langle u , \mathrm{d} \tau \rangle
    \bigr)  = \langle w, \tau \rangle - \langle w, \tau \rangle = 0 .
  \end{equation*} 
  Next, for any $ v \in V $, we have
  \begin{align*}
    \bigl\langle \mathrm{d} ( w - \mathrm{d} u ) , v \bigr\rangle +
    \langle \mathrm{d} u, \mathrm{d} v \rangle + \langle p, v \rangle
    &= \langle \mathrm{d} w , v \rangle + \langle \mathrm{d} u ,
    \mathrm{d} v \rangle + \langle p , v \rangle \\
    &= \langle \mathrm{d} w , v \rangle + \bigl( \langle \mathrm{d} u
    , \mathrm{d} v \rangle + \langle u , \mathrm{d} \mathrm{d} v
    \rangle \bigr) + \langle p , v \rangle \\
    &= \langle \mathrm{d} w , v \rangle + \langle w, \mathrm{d} v
    \rangle + \langle p, v \rangle \\
    &= \langle f, v \rangle .
  \end{align*} 
  Finally, since $( u, 0 ) \in V \times \mathfrak{H} $ solves the
  Hodge--Dirac variational problem for $w$, we have $ \langle u, q
  \rangle = 0 $ for all $ q \in \mathfrak{H} $, which completes the
  proof.
\end{proof}

As an immediate consequence of this fact, together with the
well-posedness result \autoref{cor:hodgeDiracWellPosed} for
Hodge--Dirac, we obtain well-posedness for Hodge--Laplace. (Compare
\citet[Theorem 3.1]{ArFaWi2010}.)

\begin{theorem}
  \label{thm:hodgeLaplaceWellPosed}
  The mixed variational problem for the abstract Hodge--Laplace
  operator is well-posed.  That is, there exists a constant $ c
  ^\prime $, depending only on the Poincar\'e constant $ c _P $, such
  that for all $ f \in W $, the problem \eqref{eqn:hodgeLaplace} has a
  unique solution $ ( \sigma, u, p ) \in V \times \mathfrak{H} $,
  which satisfies the estimate
  \begin{equation*}
    \lVert \sigma \rVert _V + \lVert u \rVert _V + \lVert p \rVert
    \leq c ^\prime \lVert f \rVert .
  \end{equation*}
\end{theorem}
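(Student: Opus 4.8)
The plan is to leverage \autoref{thm:diracLaplace}, which reduces the Hodge--Laplace problem to solving the Hodge--Dirac problem twice, together with the Hodge--Dirac well-posedness estimate of \autoref{cor:hodgeDiracWellPosed}. For existence, given $f \in W$, I would first invoke \autoref{cor:hodgeDiracWellPosed} to obtain the unique solution $(w,p) \in V \times \mathfrak{H}$ of the Hodge--Dirac problem for $f$, satisfying $\lVert w \rVert _V + \lVert p \rVert \leq c \lVert f \rVert$. Since the second line of \eqref{eqn:hodgeDirac} forces $w \perp \mathfrak{H}$, solving the Hodge--Dirac problem for $w$ produces a solution of the form $(u,0) \in V \times \mathfrak{H}$ with vanishing harmonic part, and $\lVert u \rVert _V \leq c \lVert w \rVert \leq c \lVert w \rVert _V \leq c ^2 \lVert f \rVert$. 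By \autoref{thm:diracLaplace}, the triple $(\sigma, u, p) = (w - \mathrm{d} u, u, p)$ then solves \eqref{eqn:hodgeLaplace}.

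For the \emph{a priori} bound, I would assemble the estimates above. The terms $\lVert u \rVert _V$ and $\lVert p \rVert$ are already controlled by $\lVert f \rVert$. For $\sigma = w - \mathrm{d} u$, the triangle inequality gives $\lVert \sigma \rVert _V \leq \lVert w \rVert _V + \lVert \mathrm{d} u \rVert _V$; because $\mathrm{d}$ is nilpotent, $\lVert \mathrm{d} u \rVert _V = \lVert \mathrm{d} u \rVert \leq \lVert u \rVert _V$, so $\lVert \sigma \rVert _V \leq \lVert w \rVert _V + \lVert u \rVert _V \leq ( c + c ^2 ) \lVert f \rVert$. Collecting terms produces the estimate with a constant $c ^\prime$ built only from $c$, which by \autoref{cor:hodgeDiracWellPosed} depends only on $c _P$.

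It remains to establish uniqueness, which I would obtain by an energy argument on the homogeneous problem. Suppose $(\sigma, u, p)$ solves \eqref{eqn:hodgeLaplace} with $f = 0$. The first line identifies $u \in \mathcal{D} (\mathrm{d} ^\ast)$ with $\mathrm{d} ^\ast u = \sigma$, and the third line gives $u \perp \mathfrak{H}$. Testing the second line with $v = u$, and using $\langle \mathrm{d} \sigma, u \rangle = \langle \sigma, \mathrm{d} ^\ast u \rangle = \lVert \sigma \rVert ^2$ together with $\langle p, u \rangle = 0$, yields $\lVert \sigma \rVert ^2 + \lVert \mathrm{d} u \rVert ^2 = 0$. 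Hence $\mathrm{d} u = 0$ and $\sigma = \mathrm{d} ^\ast u = 0$, so $u \in \mathfrak{Z} \cap \mathfrak{Z} ^\ast = \mathfrak{H}$; combined with $u \perp \mathfrak{H}$, this forces $u = 0$, and then the second line reduces to $\langle p, v \rangle = 0$ for all $v \in V$, giving $p = 0$ by density of $V$ in $W$.

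The main obstacle will be keeping the bookkeeping of domains and norms consistent rather than any deep difficulty: one must verify that $w \perp \mathfrak{H}$ so the second Hodge--Dirac problem has vanishing harmonic part, convert between the $W$-norm of the data $w$ and its $V$-norm when chaining the two estimates, and confirm in the uniqueness step that the first variational equation genuinely places $u$ in $\mathcal{D} (\mathrm{d} ^\ast)$ so that the integration-by-parts identity $\langle \mathrm{d} \sigma, u \rangle = \langle \sigma, \mathrm{d} ^\ast u \rangle$ is legitimate.
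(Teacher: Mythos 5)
Your proposal is correct and follows essentially the same route as the paper: construct the solution via \autoref{thm:diracLaplace} from two successive Hodge--Dirac solves and chain the estimates of \autoref{cor:hodgeDiracWellPosed}, using the nonexpansiveness of $\mathrm{d}$ in the $V$-norm to bound $\lVert \sigma \rVert _V = \lVert w - \mathrm{d} u \rVert _V$. Your explicit energy argument for uniqueness (testing the homogeneous system with $v = u$ after identifying $\sigma = \mathrm{d} ^\ast u$) is a valid supplement to a step the paper leaves implicit.
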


\begin{proof}
  By \autoref{thm:diracLaplace} and the triangle inequality,
  \begin{equation*}
    \lVert \sigma \rVert _V + \lVert u \rVert _V + \lVert p \rVert =
    \lVert w - \mathrm{d} u \rVert _V + \lVert u \rVert _V + \lVert p
    \rVert \leq \lVert w \rVert _V + 2 \lVert u  \rVert _V + \lVert p
    \rVert .
  \end{equation*} 
  Now, since $ ( u, 0 ) $ solves the Hodge--Dirac problem for $w$,
  \autoref{cor:hodgeDiracWellPosed} implies that $ \lVert u \rVert _V
  \leq c \lVert w \rVert \leq c \lVert w \rVert _V $, where $c$
  depends only on $ c _P $. Therefore,
  \begin{equation*}
    \lVert \sigma \rVert _V + \lVert u \rVert _V + \lVert p \rVert
    \leq \lVert w \rVert _V + 2 c \lVert w \rVert _V + \lVert p \rVert
    \leq ( 1 + 2 c ) \bigl( \lVert w \rVert _V + \lVert p \rVert
    \bigr) .
  \end{equation*} 
  Another application of \autoref{cor:hodgeDiracWellPosed} gives $
  \lVert w \rVert _V + \lVert p \rVert \leq c \lVert f \rVert $, so
  \begin{equation*}
    \lVert \sigma \rVert _V + \lVert u \rVert _V + \lVert p \rVert
    \leq ( 1 + 2 c ) c \lVert f \rVert = c ^\prime \lVert f \rVert ,
  \end{equation*} 
  which completes the proof.
\end{proof}

While we have shown that one can solve the Hodge--Laplace problem by
solving related Hodge--Dirac problems, the converse is also true: if $
L u + p = f $, then clearly $ \mathrm{D} ( \mathrm{D} u ) + p = f $,
so $ ( \mathrm{D} u , p ) $ solves the Hodge--Dirac problem. (Note
that $ \mathrm{D} u \perp \mathfrak{H} $, since the range of $
\mathrm{D} $ is $ \mathfrak{B} \oplus \mathfrak{B} ^\ast =
\mathfrak{H} ^\perp $.) A similar result holds for the variational
problems, as we now show.

\begin{theorem}
  If $ ( \sigma, u , p ) \in V \times V \times \mathfrak{H} $ solves
  \eqref{eqn:hodgeLaplace}, then $ ( \sigma + \mathrm{d} u , p ) \in V
  \times \mathfrak{H} $ solves \eqref{eqn:hodgeDirac}.
\end{theorem}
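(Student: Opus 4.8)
The plan is to set $\tilde{u} = \sigma + \mathrm{d} u$ and to verify directly that $(\tilde{u}, p)$ satisfies each of the two lines of \eqref{eqn:hodgeDirac}. First I would confirm that $\tilde{u}$ is an admissible trial function, i.e.\ that it lies in $V$: since $\sigma \in V$ by hypothesis and $\mathrm{d} u \in \mathfrak{B} \subset \mathfrak{Z} \subset V$, the sum $\tilde{u}$ belongs to $V$. With admissibility in hand, the whole argument reduces to two short variational manipulations.

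For the first line of \eqref{eqn:hodgeDirac}, I would compute $\mathrm{d} \tilde{u} = \mathrm{d}\sigma + \mathrm{d}^2 u = \mathrm{d}\sigma$ by nilpotency, and expand $\langle \tilde{u}, \mathrm{d} v \rangle = \langle \sigma, \mathrm{d} v \rangle + \langle \mathrm{d} u, \mathrm{d} v \rangle$. Substituting these into $\langle \mathrm{d}\tilde{u}, v\rangle + \langle \tilde{u}, \mathrm{d} v\rangle + \langle p, v\rangle$ reproduces exactly the left-hand side of the second line of \eqref{eqn:hodgeLaplace}, \emph{plus} the extra term $\langle \sigma, \mathrm{d} v \rangle$. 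The key observation is that this spurious term vanishes: testing the first line of \eqref{eqn:hodgeLaplace} against $\tau = \mathrm{d} v$---a legitimate test function because $\mathrm{d} v \in \mathfrak{B} \subset V$---yields $\langle \sigma, \mathrm{d} v \rangle = \langle u, \mathrm{d}^2 v \rangle = 0$, again by nilpotency. Hence the first line of \eqref{eqn:hodgeDirac} collapses to the second line of \eqref{eqn:hodgeLaplace}, which equals $\langle f, v \rangle$.

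For the second line of \eqref{eqn:hodgeDirac}, I would show $\langle \tilde{u}, q \rangle = 0$ for every $q \in \mathfrak{H}$ by splitting $\langle \tilde{u}, q\rangle = \langle \sigma, q \rangle + \langle \mathrm{d} u, q \rangle$. The first term vanishes by testing the first line of \eqref{eqn:hodgeLaplace} with $\tau = q \in \mathfrak{H} \subset V$, which gives $\langle \sigma, q \rangle = \langle u, \mathrm{d} q \rangle = 0$ since $q \in \mathfrak{Z} = \mathcal{N}(\mathrm{d})$. The second term vanishes by orthogonality of the Hodge decomposition, as $\mathrm{d} u \in \mathfrak{B}$ while $q \in \mathfrak{H} = \mathfrak{Z} \cap \mathfrak{B}^{\perp_W}$ is orthogonal to $\mathfrak{B}$.

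There is no genuine obstacle here; the statement is essentially the variational shadow of the elementary fact that $(\mathrm{D} u, p)$ solves the Hodge--Dirac problem whenever $L u + p = f$. The only point requiring the slightest care is the treatment of the term $\langle \sigma, \mathrm{d} v \rangle$, which has no counterpart in \eqref{eqn:hodgeDirac}: the mild cleverness is to annihilate it by feeding $\tau = \mathrm{d} v$ into the first equation of \eqref{eqn:hodgeLaplace} and invoking $\mathrm{d}^2 = 0$, rather than passing through the identification $\sigma = \mathrm{d}^\ast u$ and a strong-form orthogonality argument. Keeping to the variational route ensures everything stays within the stated hypotheses.
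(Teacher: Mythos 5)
Your proposal is correct and follows essentially the same route as the paper's own proof: both annihilate the term $\langle \sigma, \mathrm{d} v\rangle$ by testing the first line of \eqref{eqn:hodgeLaplace} with $\tau = \mathrm{d} v$ and invoking $\mathrm{d}^2 = 0$, and both handle the harmonic constraint via $\tau = q$ together with $\mathfrak{H} \perp \mathfrak{B}$. The only (harmless) addition is your explicit check that $\sigma + \mathrm{d} u \in V$, which the paper leaves implicit.
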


\begin{proof}
  For any $ v \in V $, since $ \mathrm{d} \mathrm{d} u = 0 $, we have
  $ \bigl\langle \mathrm{d} ( \sigma + \mathrm{d} u ) , v \rangle =
  \langle \mathrm{d} \sigma, v \rangle $. Similarly, by the first line
  of \eqref{eqn:hodgeLaplace} with $ \tau = \mathrm{d} v $, we have $
  \langle \sigma , \mathrm{d} v \rangle = \langle u , \mathrm{d}
  \mathrm{d} v \rangle = 0 $, so $ \langle \sigma + \mathrm{d} u ,
  \mathrm{d} v \rangle = \langle \mathrm{d} u , \mathrm{d} v \rangle
  $. Therefore, by the second line of \eqref{eqn:hodgeLaplace},
  \begin{equation*}
    \bigl\langle \mathrm{d} ( \sigma + \mathrm{d} u ) , v \bigr\rangle
    + \langle \sigma + \mathrm{d} u , \mathrm{d} v \rangle + \langle p
    , v \rangle = \langle \mathrm{d} \sigma , v \rangle + \langle
    \mathrm{d} u , \mathrm{d} v \rangle + \langle p , v \rangle =
    \langle f , v \rangle .
  \end{equation*} 
  Finally, for any $ q \in \mathfrak{H} $, the first line of
  \eqref{eqn:hodgeLaplace} gives $ \langle \sigma, q \rangle = \langle
  u , \mathrm{d} q \rangle = 0 $, since $ \mathfrak{H} \subset
  \mathfrak{Z} $. Furthermore, we also have $ \langle \mathrm{d} u , q
  \rangle = 0 $, since $ \mathfrak{H} \perp \mathfrak{B} $. Hence,
  \begin{equation*}
    \langle \sigma + \mathrm{d} u , q \rangle = 0 ,
  \end{equation*} 
  which completes the proof.
\end{proof}

\subsection{The discrete Hodge--Laplace problem}

Again, let $ V _h \subset V $ be a closed subspace satisfying the
conditions discussed in \autoref{sec:discrete} (i.e., $ \mathrm{d} _h
= \mathrm{d} \rvert _{ V _h } $ is a closed-nilpotent operator on $ V
_h $) and equipped with a bounded commuting projection $ \pi _h $. We
then consider the discrete Hodge--Laplace mixed variational problem:
Find $ ( \sigma _h , u _h , p _h ) \in V _h \times V _h \times
\mathfrak{H} _h $ such that
\begin{equation}
  \label{eqn:discreteHodgeLaplace}
  \begin{alignedat}{2}
    \langle \sigma _h ,\tau _h \rangle - \langle u _h , \mathrm{d}
    \tau _h \rangle
    &= 0 , \quad &\forall \tau _h &\in V _h ,\\
    \langle \mathrm{d} \sigma _h , v _h \rangle + \langle \mathrm{d} u
    _h , \mathrm{d} v _h \rangle + \langle p _h , v _h \rangle &=
    \langle f, v _h \rangle , \quad &\forall v _h &\in V _h ,\\
    \langle u _h , q _h \rangle &= 0 , \quad &\forall q _h &\in
    \mathfrak{H} _h .
  \end{alignedat}
\end{equation}
Applying \autoref{thm:diracLaplace} to $ V _h $ implies that this
discrete Hodge--Laplace problem can be solved by solving two discrete
Hodge--Dirac problems \eqref{eqn:discreteHodgeDirac} sequentially.
That is, if $ ( w _h , p _h ) \in V _h \times \mathfrak{H} _h $ solves
the discrete Hodge--Dirac problem for $f$, and $ ( u _h , 0 ) \in V _h
\times \mathfrak{H} _h $ solves the discrete Hodge--Dirac problem for
$w _h $, then $ ( w _h - \mathrm{d} u _h , u _h , p _h ) \in V _h
\times V _h \times \mathfrak{H} _h $ solves
\eqref{eqn:discreteHodgeLaplace}. Thus, as in
\autoref{thm:hodgeLaplaceWellPosed}, the discrete Hodge--Laplace
problem is well-posed, and moreover
\autoref{cor:discreteHodgeDiracWellPosed} implies that the constant $
c ^\prime _h $ depends only on $ c _P $ and $ \lVert \pi _h \rVert $.

As a result, it is now straightfoward to obtain an error estimate for
the approximation of the Hodge--Laplace problem
\eqref{eqn:hodgeLaplace} by the discrete problem
\eqref{eqn:discreteHodgeLaplace}, which follows from the analogous
Hodge--Dirac result, \autoref{thm:errorEstimate}.  (Compare
\citet[Theorem 3.9]{ArFaWi2010}.)

\begin{theorem}
  Under the hypotheses of \autoref{thm:errorEstimate}, if $ ( \sigma ,
  u , p ) \in V \times V \times \mathfrak{H} $ solves
  \eqref{eqn:hodgeLaplace} and $ ( \sigma _h , u _h , p _h ) \in V _h
  \times V _h \times \mathfrak{H} _h $ solves
  \eqref{eqn:discreteHodgeLaplace}, then we have the error estimate
  \begin{multline*}
    \lVert \sigma - \sigma _h \rVert _V + \lVert u - u _h \rVert _V +
    \lVert p - p _h \rVert \\
    \leq C \biggl( \inf _{ \tau \in V _h } \lVert \sigma - \tau \rVert _V
    + \inf _{ v \in V _h } \lVert u - v \rVert _V + \inf _{ q \in V _h
    } \lVert p - q \rVert _V + \mu \inf _{ v \in V _h } \lVert P _{
      \mathfrak{B} } u - v \rVert _V \biggr) .
  \end{multline*} 
\end{theorem}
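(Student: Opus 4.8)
The plan is to exploit the fact, established in \autoref{thm:diracLaplace} (and in its discrete analogue in the discussion preceding this theorem), that both the continuous and the discrete Hodge--Laplace solutions are obtained by solving two Hodge--Dirac problems in sequence. Concretely, write $\sigma = w - \mathrm{d} u$ and $\sigma _h = w _h - \mathrm{d} u _h$, where $( w, p )$ solves the continuous Hodge--Dirac problem for $f$, $( u, 0 )$ solves the continuous Hodge--Dirac problem for $w$, and $( w _h , p _h )$, $( u _h , 0 )$ solve the corresponding discrete problems for $f$ and for $w _h$, respectively. The three quantities to be bounded then become $\lVert w - w _h \rVert _V + \lVert p - p _h \rVert$ (the first problem), $\lVert u - u _h \rVert _V$ (the second problem), and $\lVert \sigma - \sigma _h \rVert _V = \bigl\lVert ( w - w _h ) - \mathrm{d} ( u - u _h ) \bigr\rVert _V$, which I will control in terms of the first two.

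For the first Hodge--Dirac problem the continuous and discrete right-hand sides coincide (both equal $f$), so \autoref{thm:errorEstimate} applies directly and bounds $\lVert w - w _h \rVert _V + \lVert p - p _h \rVert$. To recast this bound in the desired variables, I would use the Hodge decomposition $w = \mathrm{d} ^\ast u + \mathrm{d} u = \sigma + \mathrm{d} u$, which gives $P _{ \mathfrak{B} } w = \mathrm{d} u$; combining the triangle inequality with the facts that $\mathrm{d} V _h \subset V _h$ and that $\mathrm{d}$ is nonexpansive on $V$ (so that $\inf _{ v \in V _h } \lVert \mathrm{d} u - v \rVert _V \leq \inf _{ v \in V _h } \lVert u - v \rVert _V$), this yields $\lVert w - w _h \rVert _V + \lVert p - p _h \rVert \leq C \bigl( \inf _{ \tau \in V _h } \lVert \sigma - \tau \rVert _V + \inf _{ v \in V _h } \lVert u - v \rVert _V + \inf _{ q \in V _h } \lVert p - q \rVert _V \bigr)$, after absorbing the (bounded) factor $\mu$ into $C$.

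The main obstacle is the second Hodge--Dirac problem, since here the continuous data $w$ and the discrete data $w _h$ differ, so \autoref{thm:errorEstimate} cannot be applied verbatim. To circumvent this I would introduce the intermediate solution $\tilde{u} _h$ of the \emph{discrete} Hodge--Dirac problem for the \emph{continuous} data $w$, and split $\lVert u - u _h \rVert _V \leq \lVert u - \tilde{u} _h \rVert _V + \lVert \tilde{u} _h - u _h \rVert _V$. The first term now involves matching data $w$, so \autoref{thm:errorEstimate} gives $\lVert u - \tilde{u} _h \rVert _V \leq C \bigl( \inf _{ v \in V _h } \lVert u - v \rVert _V + \mu \inf _{ v \in V _h } \lVert P _{ \mathfrak{B} } u - v \rVert _V \bigr)$; note that the continuous harmonic part here is $0$, which annihilates the middle infimum and produces exactly the $\mu$-term appearing in the statement. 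For the second term, linearity shows that $\tilde{u} _h - u _h$ solves the discrete Hodge--Dirac problem for $w - w _h$, so the discrete well-posedness estimate \autoref{cor:discreteHodgeDiracWellPosed} yields $\lVert \tilde{u} _h - u _h \rVert _V \leq c _h \lVert w - w _h \rVert \leq c _h \lVert w - w _h \rVert _V$, which is already controlled by the first step.

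Finally, assembling the pieces, the bound $\lVert \sigma - \sigma _h \rVert _V \leq \lVert w - w _h \rVert _V + \bigl\lVert \mathrm{d} ( u - u _h ) \bigr\rVert _V \leq \lVert w - w _h \rVert _V + \lVert u - u _h \rVert _V$ (again using nonexpansiveness of $\mathrm{d}$) reduces the $\sigma$-error to the two already-established estimates. Summing the three contributions and collecting constants gives the claimed inequality. I expect the only delicate points to be the bookkeeping of harmonic components in the intermediate problems---verifying that $P _{ \mathfrak{H} _h } w _h = 0$ (so that $( u _h , 0 )$ is genuinely the second discrete solution) and that the continuous intermediate harmonic part $P _{ \mathfrak{H} } w$ vanishes---and confirming that each occurrence of $\mu$ is either retained precisely where the statement requires it or harmlessly absorbed into $C$.
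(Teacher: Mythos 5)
Your proposal is correct and follows essentially the same route as the paper: decompose $\sigma = w - \mathrm{d}u$ via \autoref{thm:diracLaplace}, apply \autoref{thm:errorEstimate} to the first Hodge--Dirac problem (using $P_{\mathfrak{B}}w = \mathrm{d}u$ and nonexpansiveness of $\mathrm{d}$ to rewrite the bound), and handle the mismatched data in the second problem by inserting the discrete solution for the continuous datum $w$ and invoking \autoref{cor:discreteHodgeDiracWellPosed}. The bookkeeping points you flag (the harmonic components $P_{\mathfrak{H}_h}w$ and $P_{\mathfrak{H}_h}w_h$, and where $\mu$ survives versus being absorbed) are exactly the ones the paper's proof addresses.
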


\begin{proof}
  By \autoref{thm:diracLaplace}, we have $ \sigma = w - \mathrm{d} u $
  and $ \sigma _h = w _h - \mathrm{d} u _h $, where $(w,p)$ and $ (w
  _h, p _h ) $ solve, respectively, the Hodge--Dirac and discrete
  Hodge--Dirac problems for $f$. Therefore, by the triangle inequality,
  \begin{equation*}
    \lVert \sigma - \sigma _h \rVert _V + \lVert u - u _h \rVert _V +
    \lVert p - p _h \rVert \leq C \bigl( \lVert w - w _h \rVert _V +
    \lVert u - u _h \rVert _V + \lVert p - p _h \rVert \bigr) .
  \end{equation*} 
  Now, by \autoref{thm:errorEstimate},
  \begin{equation*}
    \lVert w - w _h \rVert _V + \lVert p - p _h \rVert \\
    \leq C \biggl( \inf _{ v \in V _h } \lVert w - v \rVert _V + \inf _{
      q \in V _h } \lVert p - q \rVert _V + \mu \inf _{ v \in V _h }
    \lVert P _{ \mathfrak{B} } w - v \rVert _V \biggr) .
  \end{equation*} 
  Writing $ w = \sigma + \mathrm{d} u $, we then get
  \begin{equation*}
    \inf _{ v \in V _h } \lVert w - v \rVert _V \leq \inf _{ \tau \in V _h
    } \lVert \sigma - \tau \rVert _V + \inf _{ v \in V _h } \lVert u - v
    \rVert _V .
  \end{equation*} 
  Moreover, the first line of the Hodge--Laplace variational principle
  \eqref{eqn:hodgeLaplace} implies that $ P _{ \mathfrak{B} } \sigma =
  0 $, so $ P _{ \mathfrak{B} } w = \mathrm{d} u $, and thus
  \begin{equation*}
    \inf _{ v \in V _h } \lVert P _{ \mathfrak{B}  } w - v \rVert _V
    \leq \inf _{ v \in V _h } \lVert u - v \rVert _V ,
  \end{equation*} 
  Altogether, we now have 
  \begin{equation*}
    \lVert w - w _h \rVert _V + \lVert p - p _h \rVert \\
    \leq C \biggl( \inf _{ \tau \in V _h } \lVert \sigma - \tau \rVert _V +
    \inf _{ v \in V _h } \lVert u - v \rVert _V + \inf _{ q \in V _h }
    \lVert p - q \rVert _V \biggr) .
  \end{equation*}
  so it suffices to control the remaining term $ \lVert u - u _h
  \rVert _V $.
  
  Recall that $ ( u, 0 ) $ solves the Hodge--Dirac problem for $w$,
  while $ ( u _h , 0 ) $ solves the Hodge--Dirac problem for $ w _h
  $. Since the right-hand sides of these problems are different, it is
  not possible to apply \autoref{thm:errorEstimate} just yet: there is
  a ``variational crime'' that must be controlled. To do so, let $ ( u
  _h ^\prime , P _{ \mathfrak{H} _h } w ) $ be the solution of the
  discrete Hodge--Dirac problem for $w$, so using the triangle
  inequality,
  \begin{equation*}
    \lVert u - u _h \rVert _V \leq \lVert u - u _h ^\prime \rVert _V +
    \lVert u _h ^\prime - u _h \rVert _V .
  \end{equation*} 
  By \autoref{cor:discreteHodgeDiracWellPosed}, the discrete
  well-posedness result, we have $ \lVert u _h ^\prime - u _h \rVert
  _V \leq C \lVert w - w _h \rVert $, which is already under control,
  while another application of \autoref{thm:errorEstimate} gives
  \begin{equation*}
    \lVert u - u _h ^\prime \rVert _V \leq \lVert u - u _h ^\prime
    \rVert _V + \lVert P _{ \mathfrak{H}  _h } w \rVert \leq C
    \biggl( \inf _{ v \in V _h } \lVert u - v \rVert _V + \inf _{ v
      \in V _h } \lVert P _{  \mathfrak{B}  } u - v \rVert _V \biggr).
  \end{equation*} 
  Finally, combining these estimates yields the claimed result.
\end{proof}
  
\section{A note on Hilbert complexes and nilpotent operators}
\label{sec:complex}

The framework of finite element exterior calculus, as developed by
\citet{ArFaWi2010}, is described using \emph{Hilbert complexes}
(cf.~\citet{BrLe1992}), which are closely related to the nilpotent
operators considered here (and also by
\citet{AxMc2004,AxKeMc2006}). Rather than a single Hilbert space $W$
and nilpotent operator $ \mathrm{d} $, a Hilbert complex consists of a
sequence of several Hilbert spaces $ W ^k $ and closed,
densely-defined operators
$ \mathrm{d} ^k \colon V ^k \subset W ^k \rightarrow V ^{ k + 1 }
\subset W ^{ k + 1 } $
satisfying the nilpotency property
$ \mathrm{d} ^k \mathrm{d} ^{k-1} = 0 $. The domains and operators
thus form a cochain complex (called the domain complex) in the
category of Hilbert spaces, as depicted in the following diagram:
\begin{equation*}
  \cdots V ^{k-1} \xrightarrow{ \mathrm{d} ^{k-1} } V ^k \xrightarrow{
    \mathrm{d} ^k } V ^{ k + 1 } \cdots .
\end{equation*} 
It follows immediately that $ \mathrm{d} = \bigoplus _k \mathrm{d} ^k
$ is a nilpotent operator on $ W = \bigoplus _k W ^k $, with dense
domain $ V = \bigoplus _k V ^k $. In other words, \emph{a Hilbert
  complex is simply a graded Hilbert space with a graded nilpotent
  operator}. Conversely, if $W$ is a Hilbert space with a nilpotent
operator $ \mathrm{d} $, then it corresponds to the Hilbert complex
with $ W ^k = W $ and $ \mathrm{d} ^k = \mathrm{d} $ for all $ k \in
\mathbb{Z} $, i.e., its domain complex has the infinite diagram
\begin{equation*}
\cdots V  \xrightarrow{ \mathrm{d} } V \xrightarrow{ \mathrm{d} } V
 \cdots .
\end{equation*} 
Additionally, \citet{ArFaWi2010} define some additional structures
that a Hilbert complex may have, corresponding to the different types
of nilpotent operators defined in \autoref{def:nilpotent}. A
``dictionary'' between nilpotent operator terminology and that of
Hilbert complexes is given in \autoref{tab:dictionary}.

\begin{table}
  \begin{center}
    \setlength{\tabcolsep}{1em}
    \begin{tabular}{rl}
      \textbf{ungraded} & \textbf{graded}\\
      nilpotent operator & Hilbert complex\\
      closed-nilpotent operator & closed Hilbert complex\\
      Fredholm-nilpotent operator & Fredholm complex\\
      diffuse Fredholm-nilpotent operator & \parbox[t]{3in}{Hilbert
        complex with \\
        the ``compactness property''}
    \end{tabular}
    \vskip 2ex 
  \end{center}
  \caption{A ``dictionary'' between the language of nilpotent
    operators and that of Hilbert complexes.}
  \label{tab:dictionary}
\end{table}

To discretize the Hodge--Laplace problem on a closed Hilbert complex,
\citet{ArFaWi2010} consider a \emph{Hilbert subcomplex} consisting of
closed subspaces $ V _h ^k \subset V ^k $ such that $ \mathrm{d} _h ^k
= \mathrm{d} ^k \rvert _{ V _h ^k } $, and equipped with bounded
commuting projections $ \pi _h ^k \colon V ^k \rightarrow V _h ^k
$. Again, taking the direct sums $ V _h = \bigoplus _k V _h ^k $, $
\mathrm{d} _h = \bigoplus _k \mathrm{d} _h ^k $, and $ \pi _h =
\bigoplus _k \pi _h ^k $, one gets an approximating subspace and
bounded commuting projection satisfying the conditions of
\autoref{sec:discrete}. In other words, a Hilbert subcomplex with
bounded commuting projections is precisely the situation of
\autoref{sec:discrete}, in the case where the spaces and maps are
graded.

This observation implies that the elements commonly used in finite
element exterior calculus also yield a stable discretization of the
Hodge--Dirac problem. Indeed, these elements---which include the $
\mathcal{P} _r $ and $ \mathcal{P} _r ^- $ families of
piecewise-polynomial differential forms on simplicial meshes
(cf.~\citet{ArFaWi2006,ArFaWi2010}) and the more recent $ \mathcal{S}
_r $ family on cubical meshes (cf.~\citet{ArAw2013})---give
subcomplexes of the $ L ^2 $ de~Rham complex with bounded commuting
projections. Therefore, taking the direct sum over all degrees $k$,
one obtains subspaces and projections satisfying the conditions in
\autoref{sec:discrete}, implying stability and convergence for the
Hodge--Dirac problem.

\section{Numerical application: computing vector fields with
  prescribed divergence and curl}
\label{sec:numerical}

We conclude with a simple numerical example, illustrating an
application of the techniques developed throughout the paper.

Given a bounded domain $ U \subset \mathbb{R}^2 $, let $ L ^2 \Omega
(U) $ be the $ L ^2 $ de~Rham complex on $U$. For the domain of $
\mathrm{d} $, take either the domain complex $ H \Omega (U) $, with
natural boundary conditions, or $ \mathring{H} \Omega (U) $, with
essential boundary conditions, i.e., $0$-forms vanish on the boundary
and $1$-forms vanish tangent to the boundary (cf.~\citet[Sections 4.2
and 6.2]{ArFaWi2010}).  Now, given scalar functions $ f, g \in L ^2
(U) $, consider the Hodge--Dirac problem
\begin{equation*}
  \mathrm{D} u + p = f + g \,\mathrm{d}x _1 \wedge \mathrm{d} x _2 .
\end{equation*} 
Notice that, since there is no $1$-form component on the right-hand
side, the $0$- and $2$-form components of $u$ vanish, so $u$ is simply
a $1$-form. Furthermore, if $f$ and $g$ are orthogonal to harmonic
$0$- and $2$-forms, respectively, then $ p = 0 $ as well. If, in
addition, $U$ is simply connected (so that there are no harmonic
$1$-forms), then $u$ is the unique $1$-form satisfying
\begin{equation*}
  \mathrm{d} ^\ast u = f , \qquad \mathrm{d} u = g \,\mathrm{d}x _1
  \wedge \mathrm{d} x _2 .
\end{equation*} 
Identifying the $1$-form $u = u _1 \,\mathrm{d}x _1 + u _2
\,\mathrm{d}x _2 $ with the vector field $ ( u _1 , u _2 ) $, this is
equivalent to
\begin{equation*}
  - \operatorname{div} u = f , \qquad \operatorname{curl} u = g .
\end{equation*} 
That is, the Hodge--Dirac problem allows us to find a vector field
with prescribed divergence and curl, subject to either natural
boundary conditions ($u$ is tangent to the boundary) or essential
boundary conditions ($u$ is normal to the boundary).

\begin{figure}
  \centering
  \includegraphics[width=0.45\linewidth, clip=true, trim=1.75in .25in
  1.75in .25in]{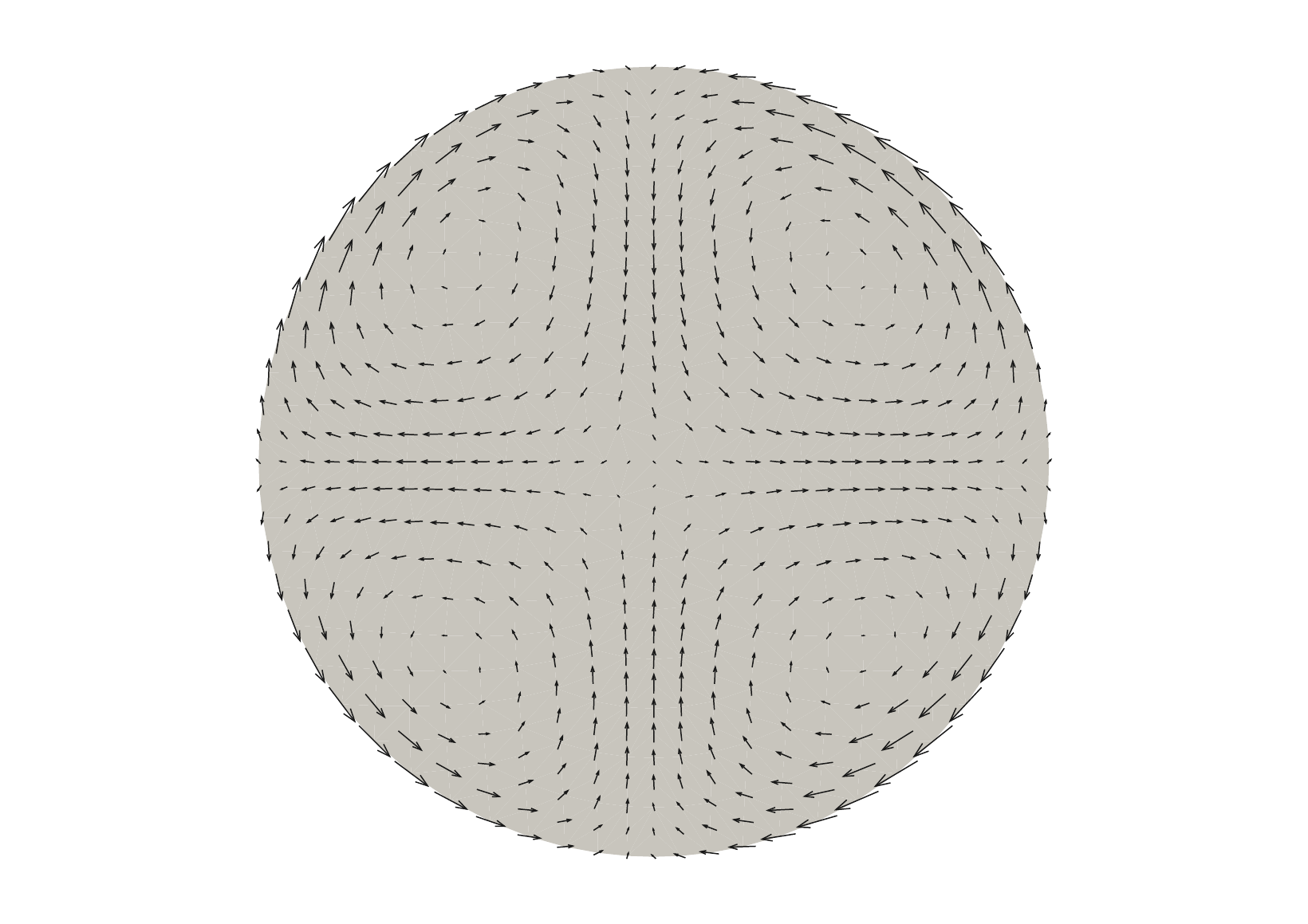} \hfil
  \includegraphics[width=0.45\linewidth, clip=true, trim=1.75in .25in
  1.75in .25in]{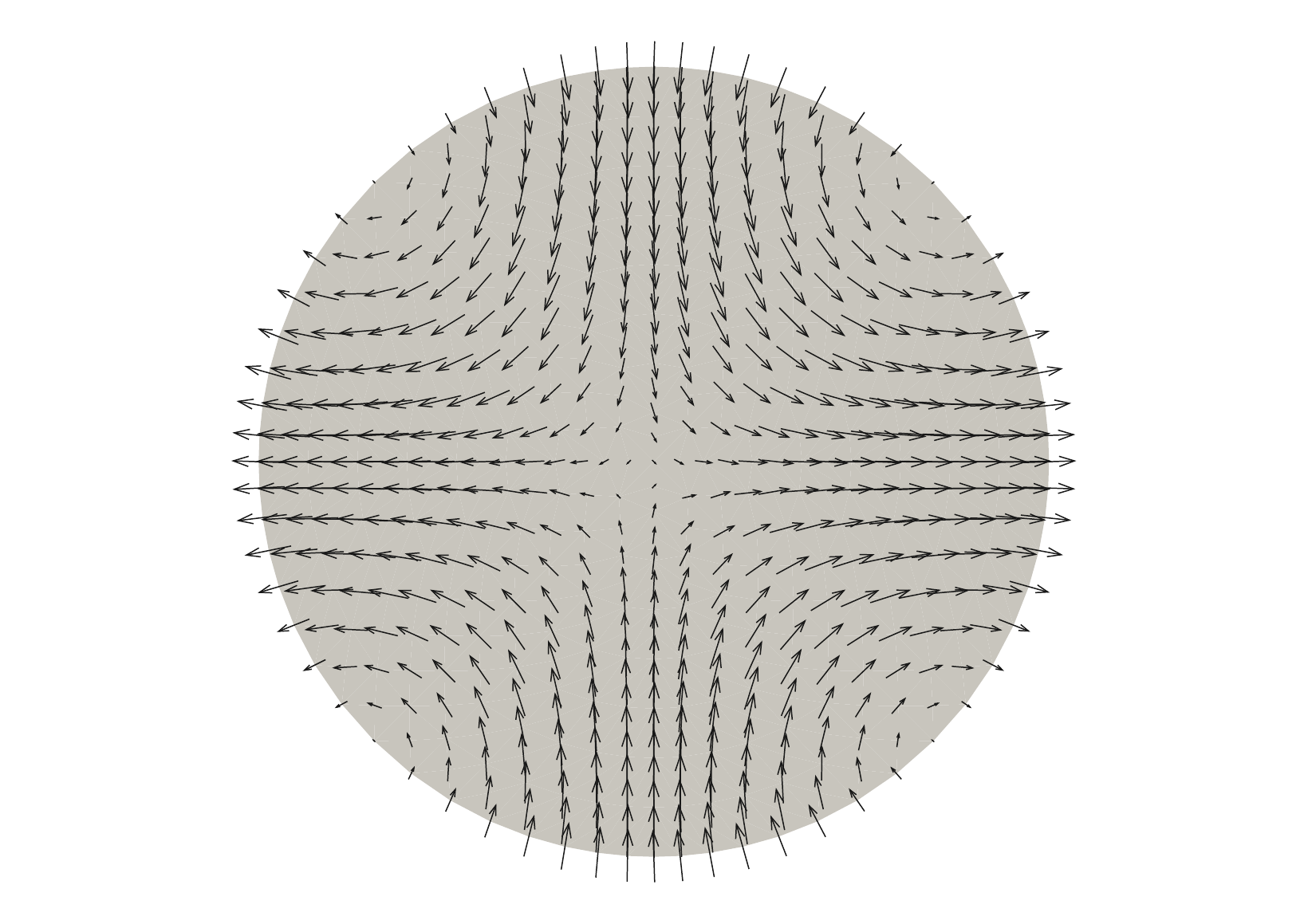}
  \caption{Divergence-free vector fields on the unit disk with curl
    equal to $ x _1 x _2 $. Left: natural boundary conditions
    (vanishing normal component). Right: essential boundary conditions
    (vanishing tangential component).}
  \label{fig:numerical}
\end{figure}

\autoref{fig:numerical} shows numerical solutions to this problem for
both choices of boundary conditions, where $U$ is taken to be the unit
disk, $ f = 0 $, and $ g = x _1 x _2 $. (Since $ \mathfrak{H} $
contains only constant functions, and $ \int _U f = \int _U g = 0 $,
it follows that each is orthogonal to $\mathfrak{H}$.) The discrete
Hodge--Dirac problem \eqref{eqn:discreteHodgeDirac} was implemented
using FEniCS (\citet{LoMaWe2012}), where $ V _h $ was taken to be the
space of $ \mathcal{P} _1 ^- $ differential forms, i.e., Whitney
forms.

Finally, we note that this problem of ``designing'' vector fields has
an interesting application in computer graphics, as studied by
\citet{FiScDeHo2007}. For hair- and fur-like textures, rather than
having an artist painstakingly specify the direction of every
individual hair, it is easier for the artist to specify the divergence
and curl at a set of control points (e.g., cowlicks, whorls), and to
solve for the vector field of hair orientations satisfying the desired
properties.

\section{Conclusion}

We have shown that the general techniques of finite element exterior
calculus, in addition to their previously-studied application to the
abstract Hodge--Laplace problem, may also be applied to an
appropriately defined abstract Hodge--Dirac problem. In addition to
resolving questions in the Clifford analysis community about the
discretization of certain Dirac-type operators, this also yields new
insight into the stability and convergence estimates of
\citet{ArFaWi2010}, which may be recovered as corollaries of the
estimates obtained here. Finally, due to the direct relationship
between Hilbert complexes and nilpotent operators, it follows that
stable elements for the Hodge--Laplace problem are also stable for the
Hodge--Dirac problem, and hence no new or exotic elements need to be
introduced.

\begin{acknowledgments}
  P.~L.~gratefully acknowledges the support of the Australian Research
  Council, the Australian Mathematical Sciences Institute, and the
  Australian National University.  A.~S.~ gratefully acknowledges the
  support of the Simons Foundation through its AMS--Simons Travel
  Grant and Collaboration Grants in Mathematics programs.
\end{acknowledgments}

\footnotesize

\end{document}